\theoremstyle{plain}
\newtheorem{Thm}{Theorem}
\newtheorem{Prop}[Thm]{Proposition}
\newtheorem{Lem}[Thm]{Lemma}
\newtheorem{ex}[Thm]{Example}
\newcommand{\z}{\textstyle}
\newcommand{\ve}{\varepsilon}\newcommand{\vk}{\varkappa}
\newcommand{\ia}{{\rm a}}\newcommand{\id}{{\rm d}}\newcommand{\tr}{{\rm tr}}
\newcommand{\iC}{{\rm C}}\newcommand{\iF}{{\rm F}}\newcommand{\iT}{{\rm T}}
\newcommand{\cM}{\mathcal{M}}\newcommand{\cP}{\mathcal{P}}\newcommand{\cS}{\mathcal{S}}
\newcommand{\cV}{\mathcal{V}}
\newcommand{\C}{\mathbb{C}}\newcommand{\N}{\mathbb{N}}\renewcommand{\P}{\mathbb{P}}
\newcommand{\Q}{\mathbb{Q}}\newcommand{\T}{\mathbb{T}}\newcommand{\Z}{\mathbb{Z}}
\newcommand{\BM}{\begin{smallmatrix}}\newcommand{\EM}{\end{smallmatrix}}
\newcommand{\BC}{\begin{cases}}\newcommand{\EC}{\end{cases}}
\newcommand{\VS}[1]{\z\sum\limits_{#1}}
\newcommand{\VP}[1]{\z\prod\limits_{#1}}
\newcommand{\VT}[1]{\z\bigoplus\limits_{#1}}
\newcommand{\ang}[1]{\langle #1\rangle}\newcommand{\tu}{\bigtriangleup}
\newcommand{\6}{\;\;\;\;\;\;}
\begin{document}

\title{Trace formulas of the Hecke operator on the spaces of newforms}
\author{SUDA Tomohiko}
\address{Department of Mathematics, Hokkaido University,
Sapporo 060-0810, Japan}
\email{s053019@math.sci.hokudai.ac.jp}

\maketitle

\section{Introduction}

For a positive integer $N$ and an even positive integer $k$, let $\cS_k(N)$ be the space of all cuspforms of weight $k$ with respect to the congruence subgroup
\[\Gamma_0(N)=\{(\BM a&b\\c&d\EM)\in{\rm SL}_2(\Z) \,|\, c\in N\Z\}\]
of level $N$, and $\cS_k^0(N)$ the subspace of $\cS_k(N)$ consisting of all newforms (cf. \cite[Definition 5.6.1]{DS}). For a positive integer $l$, we denote by $\iT(l)$ the $l$-th Hecke operator on $\cS_k(N)$. The first purpose of this paper is to write down on $\tr(\iT(l)|_{\cS_k^0(N)})$, for square-free $l$ and general $N$:
\begin{Thm}\label{1}If $l>1$ is square-free and $(l,N/\!\!/l)=1$, then we have
\[\tr(\iT(l)|_{\cS_k^0(N)})=-\VS{t\in\T(l)}a_{t,l,k}h_{t,l}\Lambda_{t,l}(N)+\delta_{k,2}\mu(N)\VP{p\in\P(l/\!\!/N)}(1+p).\]
Here $m/\!\!/n=m/(m,n)$, $\P(n)$ is the set of all prime divisors of $n$, $\mu$ the M\"obius function,
\[\delta_{x,y}=\BC 1 & \textit{if }x=y, \\ 0 & \textit{if }x\ne y,\EC\]
and $\T(l)=\T_-(l)\cup\T_\square(l)$ with
\begin{align*}\T_-(l)&=\{t\in\Z \,|\, t^2-4l<0\},\\
\T_\square(l)&=\{t\in\Z \,|\, t^2-4l\text{ is square}\}.\end{align*}
For each $t\in\T_-(l)$ $(\text{resp. }\T_\square(l))$, we put
\[a_{t,l,k}=\frac{\zeta^{k-1}-\eta^{k-1}}{\zeta-\eta}\;\bigg(\text{resp. }\frac{\min\{\zeta^{k-1},\eta^{k-1}\}}{2|\zeta-\eta|}\bigg),\]
with $\zeta,\eta$ two roots of $X^2-tX+l=0$,
\[h_{t,l}=\frac{h(t^2-4l)}{\text{the number of units in }\Q(\sqrt{t^2-4l})}\;(\text{resp. }1).\]
For the definition of $\Lambda_{t,l}$, see \S4.
\end{Thm}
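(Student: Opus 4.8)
The plan is to deduce Theorem~\ref{1} from the Eichler--Selberg trace formula on the \emph{full} space $\cS_k(N)$ by isolating the old part and inverting a multiplicative relation in the level. I take as input the classical trace formula, in the normalization to be set up in the preceding sections, in which the level enters only through a counting function:
\[\tr(\iT(l)|_{\cS_k(N)})=-\VS{t\in\T(l)}a_{t,l,k}h_{t,l}\,E_{t,l}(N)+\delta_{k,2}\,b(N),\]
where $a_{t,l,k}$ and $h_{t,l}$ are \emph{exactly} the weight and class-number factors of the statement, $E_{t,l}(N)$ is the level-$N$ optimal-embedding (congruence) count attached to $x^2-tx+l$, and $b(N)$ is the weight-two correction. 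No identity term occurs, since $l>1$ is square-free and hence not a perfect square. The entire content is then to show that sieving out the oldforms replaces $E_{t,l}$ by $\Lambda_{t,l}$ and $b$ by the stated product, while leaving $a_{t,l,k}h_{t,l}$ untouched.

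Next I would use the newform decomposition $\cS_k(N)=\VT{M\mid N}\VT{g}\cS_k(g;N)$, the direct sum over newforms $g$ of level $M\mid N$ of the oldspaces $\cS_k(g;N)=\ang{g(dz):d\mid N/M}$, and compute $\tr(\iT(l)|_{\cS_k(g;N)})$ prime by prime. Since $l$ is square-free, $\iT(l)=\VP{p\mid l}\iT(p)$, and the hypothesis $(l,N/\!\!/l)=1$ forces $p\|N$ for every $p\mid(l,N)$; it is inherited by every divisor $M\mid N$. For $p\nmid N$ the operator $\iT(p)=T_p$ is the scalar $a_p(g)$; for $p\mid l$ with $p\mid N$ the operator $\iT(p)=U_p$ acts on the block $\ang{g(z),g(pz)}$ (when $p\nmid M$) by $\bigl(\BM a_p(g)&1\\-p^{k-1}&0\EM\bigr)$, of trace $a_p(g)$, and by the scalar $a_p(g)$ when $p\mid M$; each prime $p\nmid l$ dividing $N/M$ merely contributes its local multiplicity. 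Hence
\[\tr(\iT(l)|_{\cS_k(g;N)})=a_l(g)\,c(N/M),\6 c(n)=\VP{p\mid n,\;p\nmid l}(v_p(n)+1),\]
and summing over $g$ (so that $\VS{g}a_l(g)=\tr(\iT(l)|_{\cS_k^0(M)})$) produces the multiplicative convolution
\[\tr(\iT(l)|_{\cS_k(N)})=\VS{M\mid N}c(N/M)\,\tr(\iT(l)|_{\cS_k^0(M)}).\]

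The function $c$ is multiplicative with $c(p^a)=a+1$ for $p\nmid l$ and $c(p^a)=1$ for $p\mid l$, so its Dirichlet inverse $c^{-1}$ is multiplicative with $c^{-1}(p^a)=(\mu*\mu)(p^a)$ for $p\nmid l$ and $c^{-1}(p^a)=\mu(p^a)$ for $p\mid l$. Because the convolution holds at every intermediate level, Möbius inversion applies and, substituting the full-space formula and interchanging the two sums, isolates for each $t$ the factor
\[\tr(\iT(l)|_{\cS_k^0(N)})=\VS{M\mid N}c^{-1}(N/M)\,\tr(\iT(l)|_{\cS_k(M)}),\6\Lambda_{t,l}(N):=\VS{M\mid N}c^{-1}(N/M)\,E_{t,l}(M),\]
which I take to be the definition recorded in \S4, while the weight-two piece becomes $\VS{M\mid N}c^{-1}(N/M)\,b(M)$, to be evaluated to $\mu(N)\VP{p\in\P(l/\!\!/N)}(1+p)$.

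The delicate points are twofold. First, the local analysis at the shared primes $p\mid(l,N)$: I must verify that $U_p$ contributes trace $a_p(g)$ \emph{uniformly}, whether or not $p\mid M$, so that the multiplicity $c$ is independent of $g$, of the index $t$, and of the dichotomy $\T_-$ versus $\T_\square$; this uniformity is precisely what lets $a_{t,l,k}h_{t,l}$ pass through the sieve unchanged, and it means the elliptic and split families are handled in parallel. Second, the explicit evaluation of the convolution $c^{-1}*b$ into the clean Eisenstein correction $\delta_{k,2}\mu(N)\VP{p\in\P(l/\!\!/N)}(1+p)$ requires knowing $b$ prime-locally and a short case split according to whether $p\mid l$, $p\mid N$, or both. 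I expect the careful bookkeeping of these local factors, rather than any single hard estimate, to be the crux.
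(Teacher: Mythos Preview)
Your proposal is correct and follows essentially the same route as the paper: you establish the convolution identity $\tr_{k,l}=c*\tr^0_{k,l}$ with $c=\textit1*\textit1[l]$ (the paper does this in Lemma~\ref{THO} via the oldform blocks $\cV(f,N)$), invert by $c^{-1}=\mu*\mu[l]$, and then push this through the Eichler--Selberg formula, which is exactly how \S4 defines $\Lambda_{t,l}=\sum_{\phi\mid m}b_{t,l,\phi}(\mu*\mu[l]*c_{t,l,\phi})$ and evaluates the weight-two term. The only cosmetic difference is that the paper obtains the block trace by proving $f^{(h)}|\iT_N(l)=a_{l/\!\!/h}f^{(h/\!\!/l)}$ via the operator relations of Lemma~\ref{HO}, whereas you read it off from the explicit $U_p$-matrix; both yield the same diagonal contribution $a_l(g)\,c(N/M)$.
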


Theorem \ref{1} has been conjectured by Kazuhide Kubo \cite{K}, for $N$ square-free, prime to 6, and $l|N$. We note that if $(l,N/\!\!/l)\ne1$ then $\tr(\iT(l)|_{\cS^0_k(N)})=0$ (cf. \cite[Theorem 4.6.17(3)]{Mi}). Theorem \ref{1} is derived from the Atkin-Lehner theory (cf \cite{AL} or \cite[\S5]{DS}) and the Eichler-Selberg trace formula (\cite[Theorem 6.8.4]{Mi} or \cite{E}) on $\iT(l)|_{\cS_k(N)}$. Some explicit calculations on $\Lambda_{t,l}$ is performed in \S5 and we write down concrete examples of Theorem \ref{1} for some $l$ and $N\le42$ in \S6.

For $f\in\cS^0_k(N)$, we denote $\ia_n(f)$ the $n$-th Fourier coefficient of $f$. We say $f$ is primitive if $f|\iT(l)=\ia_l(f)f$ for all positive integers $l$, and put $\cP_k(N)$ the set of all primitive forms in $\cS_k^0(N)$. We put $\vk=\frac k2-1$ throughout this paper. With $N^\times=\VP{p\in\P(N),\,p^2\nmid N}p$, we define for each positive divisor $i$ of $N^\times$,
\[\cP_k(N;i)=\big\{f\in\cP_k(N) \,\big|\, p\in\P(i)\Longrightarrow\ia_p(f)=-p^\vk,\;p\in\P(\z\frac{N^\times}i)\Longrightarrow\ia_p(f)=p^\vk\big\},\]
\[\cS_k^0(N;i)=\VT{f\in\cP_k(N;i)}\C f.\]
Then we see $\cP_k(N)=\sqcup_{i|N^\times}\cP_k(N;i)$ (cf. Miyake[6, Theorem 4.6.17]), thus
\[\cS^0_k(N)=\VT{f\in\cP_k(N)}\C f=\VT{i|N^\times}\cS_k^0(N;i),\]
and $\cS_k^0(N;i)$ is a Hecke submodule of $\cS_k^0(N)$ since so is $\C f$. We obtain
\begin{Thm}\label{2}
If $i|N^\times$ and $(l,N^\times)=1$, then we have
\[\tr(\iT(l)|_{\cS_k^0(N;i)})=\z\frac1{\sigma(N^\times)}\VS{h|N^\times}\ang{h,i}h^{-\vk}\tr(\iT(hl)|_{\cS^0_k(N)})\]
where $\sigma(n)$ is the number of all divisors of $n$ and $\ang{h,i}=(-1)^{\#(\P(h)\cap\P(i))}$.
\end{Thm}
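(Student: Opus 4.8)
The plan is to diagonalize the operators $\iT(h)$ for $h\mid N^\times$ on the pieces $\cS_k^0(N;i)$, reduce the statement to a linear system relating the traces over $\cS_k^0(N;i)$ to those over all of $\cS_k^0(N)$, and then invert that system by an orthogonality relation on the divisors of $N^\times$.

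First I would determine how $\iT(h)$ acts on $\cS_k^0(N;i)$ for each $h\mid N^\times$. Since $N^\times$ is square-free, $h$ is a product of distinct primes dividing $N^\times$, and every $f\in\cP_k(N;i)$ is primitive, so its Hecke eigenvalues are multiplicative and $\ia_h(f)=\VP{p\in\P(h)}\ia_p(f)$. By the very definition of $\cP_k(N;i)$ the $p$-th factor equals $-p^\vk$ for $p\in\P(i)$ and $p^\vk$ for $p\in\P(N^\times/i)$, so that $\ia_h(f)=\ang{h,i}h^\vk$, a scalar independent of $f$. Hence $\iT(h)$ acts on all of $\cS_k^0(N;i)$ as multiplication by $\ang{h,i}h^\vk$.

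Next, because the hypothesis $(l,N^\times)=1$ forces $(h,l)=1$ for every $h\mid N^\times$, the multiplicativity $\iT(hl)=\iT(h)\iT(l)$ gives $\tr(\iT(hl)|_{\cS_k^0(N;j)})=\ang{h,j}h^\vk\tr(\iT(l)|_{\cS_k^0(N;j)})$ for each $j\mid N^\times$. Summing over the Hecke-stable decomposition $\cS_k^0(N)=\VT{j\mid N^\times}\cS_k^0(N;j)$ yields
\[\tr(\iT(hl)|_{\cS_k^0(N)})=h^\vk\VS{j\mid N^\times}\ang{h,j}\,\tr(\iT(l)|_{\cS_k^0(N;j)}).\]
I would then multiply by $\ang{h,i}h^{-\vk}$, sum over $h\mid N^\times$, and interchange the two sums to obtain
\[\VS{h\mid N^\times}\ang{h,i}h^{-\vk}\tr(\iT(hl)|_{\cS_k^0(N)})=\VS{j\mid N^\times}\Big(\VS{h\mid N^\times}\ang{h,i}\ang{h,j}\Big)\tr(\iT(l)|_{\cS_k^0(N;j)}).\]

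The one step carrying genuine content is the orthogonality relation $\VS{h\mid N^\times}\ang{h,i}\ang{h,j}=\sigma(N^\times)\,\delta_{i,j}$, which is what makes the transform invertible. Writing $\ang{h,i}\ang{h,j}=(-1)^{\#(\P(h)\cap\P(i))+\#(\P(h)\cap\P(j))}$ and noting that $\P(h)$ runs over all subsets of $\P(N^\times)$ as $h$ runs over divisors of $N^\times$, the sum factors into a product over the primes $p\mid N^\times$ of local sums, each equal to $2$ when $p$ divides both or neither of $i,j$ and to $0$ when $p$ divides exactly one of them. Thus the product vanishes unless $i=j$, in which case it equals $2^{\#\P(N^\times)}=\sigma(N^\times)$. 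Substituting this relation collapses the right-hand side to $\sigma(N^\times)\,\tr(\iT(l)|_{\cS_k^0(N;i)})$, and dividing by $\sigma(N^\times)$ gives the claimed identity. I expect no serious obstacle beyond this orthogonality computation; the only place a nontrivial external input enters is the scalar action in the first step, which rests on the Atkin--Lehner structure already encoded in the definition of $\cP_k(N;i)$.
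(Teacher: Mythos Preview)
Your proof is correct and follows essentially the same approach as the paper: both use the scalar action $\ia_h(f)=\ang{h,i}h^\vk$ on $\cS_k^0(N;i)$, the multiplicativity $\iT(hl)=\iT(h)\iT(l)$ for $(h,l)=1$, and the orthogonality relation $\sum_{h\mid N^\times}\ang{h,i}\ang{h,j}=\sigma(N^\times)\delta_{i,j}$. The only cosmetic difference is that the paper phrases the orthogonality via the group structure $(\N(N^\times),\tu)\cong(\Z/2\Z)^{\#\P(N^\times)}$ and its character theory, whereas you verify it directly by factoring over primes; the arguments are otherwise identical up to the direction in which the identity is read.
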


Theorem \ref{2} has been conjectured also by Kubo \cite{K}, for $N$ square-free, prime to 6 and $l=1$. We give a proof of Theorem \ref{2} in \S7. Remark that
\[f\in\cP_k(N;i),\,\nu\in{\rm Gal}(\overline{\Q}/\Q)\Longrightarrow f^\nu\in\cP_k(N;i)\]
where $f^\nu=\VS{n\in\N}\nu(\ia_n(f))q^n\;\big(q=e^{2\pi\sqrt{-1}z}\big)$, since $f^\nu\in\cP_k(N)$ and $\nu(\pm p^\vk)=\pm p^\vk$. In section 8, as an application of Theorem \ref{1} and \ref{2}, we calculate $\dim(\cS_k^0(N;i))$ for $N\le42$. By virtue of our trace formulas, the Fourier coefficients of each primitive form may be calculated, in particular, we decide some primitive forms in terms of some Eisenstein series for $N=14$, in the last section.

\section{Dimension of $\cS^0_k(N)$}

For reader's convenience, we review basic facts on arithmetic functions and show Martin's formula (cf \cite[Theorem 1]{Ma}).

For each $f,g:\N\to\Q$ we define the convolution product $f*g:\N\to\Q$ by
\[(f*g)(x)=\VS{d|x}f(\frac xd)g(d).\]
Then we see the set of all functions $\N\to\Q$ is a ring under this product with $\delta=\delta_{\bullet,1}$ unit element. We put $\mathit1(x)=1$, then we see $\mathit1*\mu=\delta$. We define
\[f[l](x)=\BC f(x)& \textit{if }(x,l)=1, \\ 0& \textit{if }(x,l)\ne1, \EC\]
then we see $f[l]*g[l]=(f*g)[l]$. We say that $f:\N\to\Q$ is multiplicative if $f(1)=1$ and
\[(m,n)=1 \Longrightarrow f(mn)=f(m)f(n).\]
We see that $\delta,\mathit1,\mu$ are multiplicative, and if $f,g$ is multiplicative then so are $f*g$, $f[l]$. In particular, $\mu*\mu[l]$ is also multiplicative. 
\begin{Lem}\label{conv}
For each $n\ge1$ and prime number $p$, we see
\[(\mu*\mu)(p^n)=\BC -2 &\text{if }n=1, \\ 1 &\text{if }n=2, \\ 0&\text{if }n\ge3,\EC\]
in addition if $p|l$ then
\[(\mu*\mu[l])(p^n)=-\delta_{n,1}.\]
\end{Lem}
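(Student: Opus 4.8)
The plan is to compute both convolutions directly from the definition of the convolution product, exploiting the fact that the relevant arguments are prime powers, so that the divisors are exactly $1,p,p^2,\dots,p^n$ and the sums have very few nonzero terms.

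First I would handle $(\mu*\mu)(p^n)$. By definition, $(\mu*\mu)(p^n)=\sum_{d\mid p^n}\mu(p^n/d)\mu(d)=\sum_{j=0}^n\mu(p^{n-j})\mu(p^j)$. Since $\mu(1)=1$, $\mu(p)=-1$, and $\mu(p^m)=0$ for $m\ge2$, the only surviving terms are those in which both exponents $n-j$ and $j$ lie in $\{0,1\}$. For $n=1$ the pairs $(n-j,j)$ are $(1,0)$ and $(0,1)$, each contributing $(-1)(1)=-1$, giving $-2$. For $n=2$ only the pair $(1,1)$ survives, contributing $(-1)(-1)=1$. For $n\ge3$ no pair has both exponents at most $1$, so the sum is $0$. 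This disposes of the first displayed formula.

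Next I would treat $(\mu*\mu[l])(p^n)$ under the assumption $p\mid l$. By definition $(\mu*\mu[l])(p^n)=\sum_{j=0}^n\mu(p^{n-j})\,\mu[l](p^j)$. The key observation is that $\mu[l](p^j)=0$ whenever $(p^j,l)\ne1$; since $p\mid l$, this kills every term with $j\ge1$, leaving only $j=0$. Thus the sum reduces to $\mu(p^n)\,\mu[l](1)=\mu(p^n)$, because $\mu[l](1)=\mu(1)=1$. Finally $\mu(p^n)=-\delta_{n,1}$, which is exactly the claimed value $-\delta_{n,1}$.

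I do not anticipate a genuine obstacle here: both identities are finite sums over the divisors of a prime power, and the vanishing of $\mu$ on higher prime powers (together with the vanishing of $\mu[l]$ on multiples of $p$ in the second case) trims the sums to one or two terms. The only point requiring slight care is bookkeeping the index ranges correctly in the $n=1$ and $n=2$ cases of the first formula, to make sure each surviving pair is counted exactly once.
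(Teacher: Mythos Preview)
Your proof is correct: both identities reduce to short finite sums over divisors of a prime power, and your bookkeeping of the surviving terms is accurate in each case. The paper itself states this lemma without proof, so your direct computation is exactly the kind of verification the author is leaving to the reader, and there is nothing to compare against.
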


For each integer $d$, we denote by $K_d$ the multiplicative function such that
\[K_d(p^n)=\BC (\frac dp)-1&\text{if }n=1,\\
-(\frac dp)&\text{if }n=2,\,p\nmid d,\\
-1&\text{if }n=2,\,p|d,\\
1&\text{if }n=3,\,p|d,\\
0&\text{otherwise}\EC\]
for each prime number $p$, where $(\frac{}p)$ is the Kronecker symbol.

For each $f\in\cS_k(N)$ and $h\in\N$, we define $f^{(h)}(z)=f(hz)$. Then, we see
\[\cS_k(N)=\VT{M|N}\VT{h|\frac NM}\cS_k(M)^{(h)}\]
by the Atkin-Lehner theory, and thus
\[\dim\cS_k(x)=\VS{M|x}\VS{h|\frac NM}\dim\cS_k^0(M)=(\textit1*\textit1*\dim\cS_k^0)(x),\]
i.e., $\dim\cS^0_k=\mu*\mu*\dim\cS_k$. On the other hand, we have
\[\dim\cS_k=\z\frac{k-1}{12}\ve_\id+\frac14(-1)^{\frac k2}\ve_2-\frac 13(\frac{k-1}3)\ve_3-\frac12\ve_\infty+\delta_{k,2}\textit1,\]
where $\ve_\id,\ve_2,\ve_3$ are the multiplicative functions satisfying
\[\ve_\id(p^n)=p^n+p^{n-1},\]
\[\ve_2(2^n)=\delta_{n,1},\6\ve_2(p^n)=1+(\z\frac{-1}p)\text{ if }p\ne2,\]
\[\ve_3(3^n)=\delta_{n,1},\6\ve_3(p^n)=1+(\z\frac{-3}p)\text{ if }p\ne3,\]
\[\ve_\infty(p^n)=p^{[\frac n2]}+p^{[\frac{n-1}2]}\]
for each $n\ge1$ and prime $p$ (cf. \cite[Theorem 3.5.1]{DS}). We note $\ve_2(p^n)=1+(\z\frac{-4}p)$ if $p\ne2$. It follows from Lemma \ref{conv} that
\[\dim\cS^0_k=\z\frac{k-1}{12}\mu*\mu*\ve_\id+\frac14(-1)^{\frac k2}K_{-4}-\frac 13(\frac{k-1}3)K_{-3}-\frac12\mu*\mu*\ve_\infty+\delta_{k,2}\mu,\]
\[(\mu*\mu*\ve_\id)(p^n)=\BC p-1&\text{if }n=1,\\
p^2-p-1&\text{if }n=2,\\
p^{n-3}(p-1)(p^2-1)&\text{if }n\ge3,\EC\]
\[(\mu*\mu*\ve_\infty)(p^n)=\BC0&\text{if }n\text{ is odd},\\
p-2&\text{if }n=2,\\
p^{\frac n2-2}(p-1)^2&\text{if }n\text{ is even},\ge4.\EC\]
\begin{ex}
For each $N\le42$, we write down the following formulas:
\begin{align*}\dim\cS^0_k(1)&=\z\frac1{12}(k-1)+\frac14(-1)^{\frac k2}-\frac13(\frac{k-1}3)-\frac12+\delta_{k,2},\\
\dim\cS^0_k(2)&=\z\frac1{12}(k-1)-\frac14(-1)^{\frac k2}+\frac23(\frac{k-1}3)-\delta_{k,2},\\
\dim\cS^0_k(3)&=\z\frac16(k-1)-\frac12(-1)^{\frac k2}+\frac13(\frac{k-1}3)-\delta_{k,2},\\
\dim\cS^0_k(4)&=\z\frac1{12}(k-1)-\frac14(-1)^{\frac k2}-\frac13(\frac{k-1}3),\\
\dim\cS^0_k(5)&=\z\frac13(k-1)+\frac23(\frac{k-1}3)-\delta_{k,2},\\
\dim\cS^0_k(6)&=\z\frac16(k-1)+\frac12(-1)^{\frac k2}-\frac23(\frac{k-1}3)+\delta_{k,2},\\
\dim\cS^0_k(7)&=\z\frac12(k-1)-\frac12(-1)^{\frac k2}-\delta_{k,2},\\
\dim\cS^0_k(8)&=\z\frac14(k-1)+\frac14(-1)^{\frac k2},\\
\dim\cS^0_k(9)&=\z\frac5{12}(k-1)+\frac14(-1)^{\frac k2}+\frac13(\frac{k-1}3)-\frac12,\\
\dim\cS^0_k(10)&=\z\frac13(k-1)-\frac43(\frac{k-1}3)+\delta_{k,2},\\
\dim\cS^0_k(11)&=\z\frac56(k-1)-\frac12(-1)^{\frac k2}+\frac23(\frac{k-1}3)-\delta_{k,2},\\
\dim\cS^0_k(12)&=\z\frac16(k-1)+\frac12(-1)^{\frac k2}+\frac13(\frac{k-1}3),\\
\dim\cS^0_k(13)&=(k-1)-\delta_{k,2},\\
\dim\cS^0_k(14)&=\z\frac12(k-1)+\frac12(-1)^{\frac k2}+\delta_{k,2},\\
\dim\cS^0_k(15)&=\z\frac23(k-1)-\frac23(\frac{k-1}3)+\delta_{k,2},\\
\dim\cS^0_k(16)&=\z\frac k2-1,\\
\dim\cS^0_k(17)&=\z\frac43(k-1)+\frac23(\frac{k-1}3)-\delta_{k,2},\\
\dim\cS^0_k(18)&=\z\frac5{12}(k-1)-\frac14(-1)^{\frac k2}-\frac23(\frac{k-1}3),\\
\dim\cS^0_k(19)&=\z\frac32(k-1)-\frac12(-1)^{\frac k2}-\delta_{k,2},\\
\dim\cS^0_k(20)&=\z\frac13(k-1)+\frac23(\frac{k-1}3),\\
\dim\cS^0_k(21)&=(k-1)+(-1)^{\frac k2}+\delta_{k,2},\\
\dim\cS^0_k(22)&=\z\frac56(k-1)+\frac12(-1)^{\frac k2}-\frac43(\frac{k-1}3)+\delta_{k,2},\\
\dim\cS^0_k(23)&=\z\frac{11}6(k-1)-\frac12(-1)^{\frac k2}-\frac23(\frac{k-1}3)-\delta_{k,2},\\
\dim\cS^0_k(24)&=\z\frac12(k-1)-\frac12(-1)^{\frac k2},\\
\dim\cS^0_k(25)&=\z\frac{19}{12}(k-1)-\frac14(-1)^{\frac k2}-\frac13(\frac{k-1}3)-\frac32,\\
\dim\cS^0_k(26)&=(k-1)+\delta_{k,2},\\
\dim\cS^0_k(27)&=\z\frac43(k-1)-\frac13(\frac{k-1}3),\\
\dim\cS^0_k(28)&=\z\frac12(k-1)+\frac12(-1)^{\frac k2},\\
\dim\cS^0_k(29)&=\z\frac73(k-1)+\frac23(\frac{k-1}3)-\delta_{k,2},\\
\dim\cS^0_k(30)&=\z\frac23(k-1)+\frac43(\frac{k-1}3)-\delta_{k,2},\\
\dim\cS^0_k(31)&=\z\frac52(k-1)-\frac12(-1)^{\frac k2}-\delta_{k,2},\\
\dim\cS^0_k(32)&=k-1,\end{align*}\begin{align*}
\dim\cS^0_k(33)&=\z\frac53(k-1)+(-1)^{\frac k2}-\frac23(\frac{k-1}3)+\delta_{k,2},\\
\dim\cS^0_k(34)&=\z\frac43(k-1)-\frac43(\frac{k-1}3)+\delta_{k,2},\\
\dim\cS^0_k(35)&=2(k-1)+\delta_{k,2},\\
\dim\cS^0_k(36)&=\z\frac5{12}(k-1)-\frac14(-1)^{\frac k2}+\frac13(\frac{k-1}3),\\
\dim\cS^0_k(37)&=3(k-1)-\delta_{k,2},\\
\dim\cS^0_k(38)&=\z\frac32(k-1)+\frac12(-1)^{\frac k2}+\delta_{k,2},\\
\dim\cS^0_k(39)&=2(k-1)+\delta_{k,2},\\
\dim\cS^0_k(40)&=k-1,\\
\dim\cS^0_k(41)&=\z\frac{10}3(k-1)+\frac23(\frac{k-1}3)-\delta_{k,2},\\
\dim\cS^0_k(42)&=(k-1)-(-1)^{\frac k2}-\delta_{k,2}.\end{align*}\end{ex}

\section{Relation lemma}

We define the $l$-th Hecke operator $\iT_N(l)$ on $\cS_k(N)$ by
\[f|\iT_N(l)=l^{k-1}\VS{d|l,\,(l/d,N)=1}\VS{b=0}^{d-1}d^{-k}f\big((\frac ldz+b)/d\big)\]
(cf. \cite[4.5.26]{Mi}).
First, the following relations of Hecke operators on different levels are easily shown(cf. \cite[in the proof of Proposition 5.6.2]{DS}):
\begin{Lem}\label{HO}
Suppose that $f\in\cS_k(N)$.\\[1mm]
\6(a) If $p,q$ are prime numbers with $p\ne q$, then we see
\[f|\iT_{pN}(q)=f|\iT_N(q),\]
\[f^{(p)}|\iT_{pN}(q)=(f|\iT_N(q))^{(p)}.\]\\[-2mm]
\6(b) If $p$ is a prime number with $(p,N)=1$, then we see
\[f^{(p)}|\iT_{pN}(p)=f.\]
\end{Lem}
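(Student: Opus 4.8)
The plan is to prove all three identities by expanding the definition of $\iT_N(l)$ given above, specializing $l$ to a prime. When $l=q$ is prime the only divisors are $d=1$ and $d=q$, and when $l=p$ the only divisors are $d=1$ and $d=p$, so each Hecke operator reduces to at most two explicit summands. Throughout I would use repeatedly that every $f\in\cS_k(N)$ is $1$-periodic, $f(z+1)=f(z)$, because $(\BM 1&1\\0&1\EM)\in\Gamma_0(N)$.

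For the first identity in (a) I would write
\[f|\iT_{pN}(q)=q^{k-1}\VS{d|q,\,(q/d,pN)=1}\VS{b=0}^{d-1}d^{-k}f\big((\frac qd z+b)/d\big)\]
and check that the admissible set of $d$ is unchanged if $pN$ is replaced by $N$. Indeed, for $d=q$ the constraint $(1,pN)=1$ holds automatically, while for $d=1$ it reads $(q,pN)=1$; since $q$ is prime and $p\ne q$ gives $(q,p)=1$, this is equivalent to $(q,N)=1$. Hence the two sums agree summand by summand, regardless of whether $(q,N)=1$ or $q\mid N$, which yields $f|\iT_{pN}(q)=f|\iT_N(q)$.

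For the second identity in (a) I would substitute $f^{(p)}(w)=f(pw)$ into each summand of $f^{(p)}|\iT_{pN}(q)$, turning the argument $(\frac qd z+b)/d$ into $(\frac qd(pz)+pb)/d$, and compare with
\[(f|\iT_N(q))^{(p)}(z)=q^{k-1}\VS{d|q,\,(q/d,N)=1}\VS{b=0}^{d-1}d^{-k}f\big((\frac qd(pz)+b)/d\big),\]
noting that the admissible $d$ coincide by the argument just given. The terms $d=1$ agree at once, both equaling $q^{k-1}f(pqz)$. For $d=q$ one must match $\sum_{b=0}^{q-1}f\big((pz+pb)/q\big)$ against $\sum_{b=0}^{q-1}f\big((pz+b)/q\big)$; here I would write $pb=q\lfloor pb/q\rfloor+(pb\bmod q)$, use $1$-periodicity to discard the integer $\lfloor pb/q\rfloor$, and observe that $b\mapsto pb\bmod q$ is a bijection of $\Z/q\Z$ since $(p,q)=1$, so the two sums coincide after reindexing.

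For (b) the candidate divisors of $p$ are $d=1$ and $d=p$. The term $d=1$ is suppressed because its coprimality condition $(p,pN)=1$ fails; only $d=p$ survives, giving $q^{k-1}$ replaced by $p^{k-1}$ and contributing $p^{k-1}p^{-k}\sum_{b=0}^{p-1}f^{(p)}\big((z+b)/p\big)$. Since $f^{(p)}\big((z+b)/p\big)=f(z+b)=f(z)$ by periodicity, this collapses to $p^{-1}\cdot p\cdot f(z)=f(z)$, proving $f^{(p)}|\iT_{pN}(p)=f$. The only step that is not pure bookkeeping is the reindexing in the second identity of (a): one must verify that multiplying the summation variable by $p$ modulo $q$, together with the integral periodicity of $f$, converts exactly the sum in $f^{(p)}|\iT_{pN}(q)$ into the one in $(f|\iT_N(q))^{(p)}$. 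I expect this small bijection-plus-periodicity argument to be the main obstacle, everything else amounting to reading off which $d$ meet the coprimality constraints.
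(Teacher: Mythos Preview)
Your proof is correct and is precisely the direct verification from the explicit formula for $\iT_N(l)$ that the paper intends: the paper does not actually prove this lemma but merely states that the relations ``are easily shown'' with a reference to \cite[in the proof of Proposition 5.6.2]{DS}. Your expansion into the two divisors $d=1,q$ (resp.\ $d=1,p$), the observation that the coprimality constraints match because $p\ne q$, and the bijection $b\mapsto pb\bmod q$ together with $1$-periodicity are exactly the computations that justify the claim.
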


For a divisor $M$ of $N$ and $f\in\cP_k(M)$, we put
\[\cV(f,N)=\VT{h|\frac NM}\C\cdot f^{(h)}.\]
\begin{Lem}Suppose $M|N$ and $f=\VS{n=1}^\infty a_nq^n\in\cP_k(M)$. If $l$ is square-free, then we have
\[\tr(\iT_N(l)|_{\cV(f,N)})=(\textit1*\textit1[l])(\z\frac NM)a_l.\]
\end{Lem}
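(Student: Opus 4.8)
The plan is to compute the action of $\iT_N(l)$ directly on the natural spanning set $\{f^{(h)} : h \mid N/M\}$ of $\cV(f,N)$ and read off the trace. Since $l$ is square-free, it suffices by multiplicativity of everything in sight to first understand the action of $\iT_N(q)$ for a single prime $q$, and then to assemble the primes. First I would fix a prime $q$ and split into two cases according to whether $q \mid N/M$ or $(q, N/M)=1$. Using Lemma \ref{HO}(a) repeatedly to pull the operator $\iT_N(q)$ down to level $M$ on each basis vector $f^{(h)}$, together with the primitivity relation $f\mid\iT_M(q)=a_q f$, I expect to express $f^{(h)}\mid\iT_N(q)$ as a $\Z$-linear combination of the $f^{(h')}$. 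The key point is that in the case $(q,N/M)=1$ the operator acts as multiplication by $a_q$ on each $f^{(h)}$ (contributing $a_q$ to the diagonal for every $h$), whereas when $q \mid N/M$ the relations in Lemma \ref{HO}, in particular the level-lowering identity $f^{(q)}\mid\iT_{qN}(q)=f$ from part (b), mix the basis vectors $f^{(h)}$ and $f^{(qh)}$ so that the diagonal contribution is different.

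The heart of the computation is therefore the single-prime trace. Writing $N/M = q^a m$ with $(q,m)=1$, I would organize the basis $\{f^{(h)}\}$ according to the $q$-adic valuation of $h$ and show that $\iT_N(q)$ acts block-diagonally with respect to the decomposition by the prime-to-$q$ part of $h$. On each block (indexed by a divisor of $q^a$, i.e. the chain $f, f^{(q)}, \dots, f^{(q^a)}$ times a fixed prime-to-$q$ divisor of $m$) the matrix of $\iT_N(q)$ is, up to the scalar $a_q$ on the diagonal and off-diagonal level-raising/lowering terms, a fixed small matrix whose trace I can evaluate. Summing over blocks, I expect
\[
\tr(\iT_N(q)|_{\cV(f,N)}) = (\textit1 * \textit1[q])(\z\tfrac NM)\,a_q,
\]
which is precisely the claimed formula for the prime $q$: the convolution factor $(\textit1*\textit1[q])(N/M)$ counts the appropriate divisors and absorbs the arithmetic difference between the two cases, since $\textit1[q]$ vanishes on multiples of $q$.

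To pass from a single prime to general square-free $l$, I would use that $\iT_N$ is multiplicative on coprime indices together with the multiplicativity of $h \mapsto (\textit1*\textit1[h])(N/M)$ and of $l\mapsto a_l$ (the latter holds since $f$ is primitive, so $a_l = \VP{q\mid l} a_q$ for square-free $l$). Because $l$ is square-free, $\iT_N(l)=\VP{q\mid l}\iT_N(q)$, and the commuting prime-by-prime action lets the traces multiply: each prime $q\mid l$ contributes its factor $(\textit1*\textit1[q])(N/M)\,a_q$, and the product reassembles into $(\textit1*\textit1[l])(N/M)\,a_l$ by the convolution identity $(\textit1*\textit1[l]) = \VP{q\mid l}(\textit1*\textit1[q])$ restricted to the relevant arguments.

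The main obstacle I anticipate is the bookkeeping in the single-prime block computation when $q \mid N/M$: tracking exactly how $f^{(q^j)}\mid\iT_N(q)$ decomposes requires careful use of both parts of Lemma \ref{HO} and attention to which divisibility condition $(l/d,N)=1$ in the definition of $\iT_N$ is active at level $N$ versus the lowered levels. Getting the diagonal entries right in this mixed case — and checking that the off-diagonal (level-changing) contributions do not contaminate the trace — is the delicate step; once the correct small-matrix trace is identified, the multiplicative assembly is routine.
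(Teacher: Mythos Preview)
Your outline is workable, but it takes a longer route than the paper and contains one step that is not justified as written.

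The paper does not compute the single-prime trace and then multiply.  Instead it proves directly, by induction on the number of prime divisors of $l$, that
\[
f^{(h)}\mid\iT_N(l)=a_{l/\!\!/h}\,f^{(h/\!\!/l)}
\]
for every $h\mid N/M$.  This one formula shows that $\iT_N(l)$ sends each basis vector to a scalar multiple of a single basis vector; the matrix is monomial, and the diagonal entries are immediate: $f^{(h)}$ lands on itself exactly when $h/\!\!/l=h$, i.e.\ when $(h,l)=1$, and then the scalar is $a_l$.  Summing over such $h$ gives $(\textit1*\textit1[l])(N/M)\,a_l$ in one line.  In particular the ``obstacle'' you anticipate in your last paragraph never materializes: there is no small matrix with mixed entries to analyze, and no off-diagonal contamination.

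Your prime-by-prime assembly can be made to work, but the sentence ``the commuting prime-by-prime action lets the traces multiply'' is not valid as stated: $\tr(AB)\neq\tr(A)\tr(B)$ for commuting $A,B$ in general.  What actually makes it true here is the tensor structure you nearly articulated.  The basis $\{f^{(h)}:h\mid N/M\}$ factors over the primes $p\mid N/M$, and each $\iT_N(q)$ acts as your shift-with-scalar operator on the $q$-chain tensored with the identity on all other factors; hence $\iT_N(l)$ is a genuine tensor product of the local operators, and it is $\tr(A\otimes B)=\tr(A)\tr(B)$, not commutativity, that gives the product of traces.  Once you say this explicitly your argument closes, but the paper's direct computation of the full action is shorter and avoids the issue altogether.
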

\begin{proof}For $h|\frac NM$, we show
\[f^{(h)}|\iT_N(l)=a_{l/\!\!/h}f^{(h/\!\!/l)}.\]
by induction on $\#\P(l)$. First, if $\#\P(l)=0$, that is $l=1$, then we get the assertion since $\iT_N(1)$ is the identity operator and $a_1=1$. Next, suppose the assertion is true for $l$ and take a prime $p\nmid l$, then we see
\[f^{(h)}|\iT_N(p)=\BC a_pf^{(h)}&\text{if }p\nmid h,\\ f^{(h/p)}&\text{if }p|h\EC\]
by Lemma \ref{HO}, thus
\[f^{(h)}|\iT_N(lp)=a_{l/\!\!/h}f^{(h/\!\!/l)}|\iT_N(p)=a_{lp/\!\!/h}f^{(h/\!\!/lp)}.\]

Now, taking a basis $\{f^{(h)}\}_{h|\frac NM}$ of $\cV(f,N)$, we see
\[\tr(\iT_N(l)|_{\cV(f,N)})=\VS{h|\frac NM}\delta_{h,h/\!\!/l}a_{l/\!\!/h}=\VS{h|\frac NM}\textit1[l](h)a_l,\]
thus we get the assertion.\end{proof}

\begin{Lem}\label{THO}
If $l$ is square-free, then we have
\[\tr^0_{k,l}=\mu*\mu[l]*\tr_{k,l},\]
where we define the function $\tr_{k,l}$ by $x\mapsto\tr(\iT(l)|_{\cS_k(x)})$ and $\tr_{k,l}^0$ by $x\mapsto\tr(\iT(l)|_{\cS_k^0(x)})$.
\end{Lem}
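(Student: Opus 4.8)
The plan is to compute the level-$N$ trace $\tr_{k,l}(N)$ block by block using the previous lemma, and then to invert the resulting arithmetic-convolution identity. First I would refine the Atkin--Lehner decomposition of $\cS_k(N)$ into translates $\cS_k^0(M)^{(h)}$ for $M|N$ and $h|\frac NM$ by splitting each $\cS_k^0(M)$ into the lines spanned by its primitive forms, so that
\[\cS_k(N)=\VT{M|N}\VT{f\in\cP_k(M)}\cV(f,N).\]
The explicit formula $f^{(h)}|\iT_N(l)=a_{l/\!\!/h}f^{(h/\!\!/l)}$ from the preceding proof shows each block $\cV(f,N)$ is stable under $\iT_N(l)$, hence $\tr_{k,l}(N)$ is the sum of the block traces $\tr(\iT_N(l)|_{\cV(f,N)})$.

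Next I would apply the previous lemma to each block and write
\[\tr_{k,l}(N)=\VS{M|N}(\textit1*\textit1[l])(\z\frac NM)\VS{f\in\cP_k(M)}\ia_l(f).\]
The crucial observation is that the inner sum is itself a newform trace: every $f\in\cP_k(M)$ is primitive, hence an eigenform of $\iT_M(l)$ with eigenvalue $\ia_l(f)$, and $\{f\}_{f\in\cP_k(M)}$ is a basis of $\cS_k^0(M)$, so $\VS{f\in\cP_k(M)}\ia_l(f)=\tr^0_{k,l}(M)$. Reading the outer sum as an arithmetic convolution in the variable $N$ then gives $\tr_{k,l}=(\textit1*\textit1[l])*\tr^0_{k,l}$.

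Finally I would invert this relation in the convolution ring. Using $\textit1*\mu=\delta$, the identity $f[l]*g[l]=(f*g)[l]$, and $\delta[l]=\delta$, the function $\mu*\mu[l]$ is the convolution inverse of $\textit1*\textit1[l]$, since $(\textit1*\textit1[l])*(\mu*\mu[l])=(\textit1*\mu)*(\textit1[l]*\mu[l])=\delta*\delta=\delta$. Convolving $\tr_{k,l}=(\textit1*\textit1[l])*\tr^0_{k,l}$ with $\mu*\mu[l]$ then yields $\tr^0_{k,l}=\mu*\mu[l]*\tr_{k,l}$. The main obstacle is really the bookkeeping of the double decomposition rather than any single computation: one must check carefully that the refinement into $\cV(f,N)$ blocks is genuinely $\iT_N(l)$-stable, so that the global trace splits as a sum of block traces, and that the inner sum of $l$-th Fourier coefficients over $\cP_k(M)$ is exactly the newform trace at level $M$; granting these, the convolution inversion is purely formal.
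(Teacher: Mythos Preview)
Your proposal is correct and follows essentially the same approach as the paper: decompose $\cS_k(N)$ into the $\iT_N(l)$-stable blocks $\cV(f,N)$, sum the block traces from the preceding lemma to obtain $\tr_{k,l}=(\textit1*\textit1[l])*\tr^0_{k,l}$, and invert by convolving with $\mu*\mu[l]$. You have in fact spelled out more carefully than the paper why the blocks are stable, why the inner sum is the newform trace, and why $\mu*\mu[l]$ is the convolution inverse of $\textit1*\textit1[l]$.
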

\begin{proof}
By the above Lemma and the Atkin-Lehner theory, we have
\begin{align*}\tr_{k,l}(x)&=\VS{M|x}\VS{f\in\cP_k(M)}\tr(\iT_N(l)|_{\cV(f,N)})\\
&=\VS{M|x}(\textit1*\textit1[l])(\frac NM)\tr(\iT_M(l)|_{\cS_k^0(M)})\\
&=(\textit1*\textit1[l]*\tr^0_{k,l})(x).\end{align*}
\end{proof}

\section{Proof of Theorem \ref{1}}

For each $t\in\T(l)$, we put $d(t,l)$ the discriminant of $\Q\big(\sqrt{t^2-4l}\big)$ and $m(t,l)=\sqrt\frac{t^2-4l}{d(t,l)}$.
With $d=d(t,l)$ and $m=m(t,l)$, for each $\phi|m$, we define
\[b_{t,l,\phi}=\phi\VP{p\in\P(\phi)}\big(1-(\frac dp)p^{-1}\big),\]
and the multiplicative function $c_{t,l,\phi}$ by
\[c_{t,l,\phi}(p^n)=\VS{\xi\in\Omega/\psi p^n}\bm1_p(\xi)+\VS{\xi\in\Omega'/\psi p^n}\bm1_p(t-\xi)\]
for each $n\ge1$ and prime $p$, where $\psi=\frac m\phi$, $\bm1_p$ is the trivial character mod $p$ and
\begin{align*}\Omega&=\{\xi\in\Z_p \,|\, \xi^2-t\xi+l\equiv0 \bmod \psi^2p^n\Z_p\},\\
\Omega'&=\{\xi\in\Z_p \,|\, \xi^2-t\xi+l\equiv0 \bmod \psi^2p^{n+1}\Z_p\}\text{ if }p|d\phi,\;=\emptyset\text{ otherwise}.\end{align*}
We note that if $t\in\T_\square(l)$ then $d=1$ and $b_{t,l,\phi}=\#(\Z/\phi\Z)^\times$.

Suppose that $l>1$ is square-free and define the multiplicative function $\omega_l$ by
\[\omega_l(p^n)=\BC\frac p{1+p}&\text{if }p\in\P(l),\\1&\text{if }p\notin\P(l).\EC\]
The Eichler-Selberg trace formula says
\[\tr_{k,l}=-\VS{t\in\T(l)}a_{t,l,k}h_{t,l}\VS{\phi|m(t,l)}b_{t,l,\phi}c_{t,l,\phi}+\delta_{k,2}\VP{p\in\P(l)}(1+p)\cdot\omega_l.\]
Now, we define
\[\Lambda_{t,l}=\VS{\phi|m(t,l)}b_{t,l,\phi}(\mu*\mu[l]*c_{t,l,\phi}),\]
then we have
\[\tr_{k,l}^0=-\VS{t\in\T(l)}a_{t,l,k}h_{t,l}\Lambda_{t,l}+\delta_{k,2}\VP{p\in\P(l)}(1+p)\cdot(\mu*\mu[l]*\omega_l).\]
and
\[(\mu*\mu[l]*\omega_l)(p)=-\z\frac1{1+p}\6\text{if }p\in\P(l),\]
\[(\mu*\mu[l]*\omega_l)(p^n)=-\delta_{n,1}\6\text{if }p\notin\P(l).\]
If $(l,N/\!\!/l)=1$, then we obtain
\[\tr_{k,l}^0(N)=-\VS{t\in\T(l)}a_{t,l,k}h_{t,l}\Lambda_{t,l}(N)+\delta_{k,2}\mu(N)\VP{p\in\P(l/\!\!/N)}(1+p).\]
 
\section{Calculations on $\Lambda$}

\begin{Prop}\label{Lambda0}
Suppose that $l>1$ is square-free, $(l,N/\!\!/l)=1$ and $(l,N)\nmid t$. Then we have
\[\Lambda_{t,l}(N)=0.\]
\end{Prop}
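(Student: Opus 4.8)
The plan is to pin down a single prime $p$ at which every summand of $\Lambda_{t,l}(N)$ is forced to vanish. Since $l$ is square-free, so is $(l,N)$, and the hypothesis $(l,N)\nmid t$ then provides a prime $p$ with $p\mid(l,N)$ and $p\nmid t$; fix it once and for all, so that $p\mid l$ and $p\mid N$. Each function $\mu*\mu[l]*c_{t,l,\phi}$ is a convolution of multiplicative functions, hence multiplicative, so its value at $N$ is the product of its values on the prime powers exactly dividing $N$. Because $b_{t,l,\phi}$ does not depend on $N$, it therefore suffices to prove that the local factor at $p$ of $\mu*\mu[l]*c_{t,l,\phi}$ vanishes for every $\phi\mid m(t,l)$.

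First I would record the $p$-adic shape of the data. From $p\mid l$ and $p\nmid t$ we get $t^2-4l\equiv t^2\not\equiv0\bmod p$, so $p\nmid t^2-4l$; consequently $p$ divides neither $d=d(t,l)$ nor $m=m(t,l)$ (note $t^2-4l\ne0$ since $l>1$ is square-free). In particular, for each $\phi\mid m$ the integer $\psi=m/\phi$ is a unit in $\Z_p$, whence $\psi^2p^n\Z_p=p^n\Z_p$, and $p\nmid d\phi$, whence $\Omega'=\emptyset$. So only $\Omega$ contributes to $c_{t,l,\phi}(p^n)$, and $\Omega/\psi p^n$ is just the set of solutions of $\xi^2-t\xi+l\equiv0\bmod p^n$.

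Next I would compute $c_{t,l,\phi}(p^n)$ for $n\ge1$. Modulo $p$ the polynomial $\xi^2-t\xi+l$ reduces to $\xi(\xi-t)$, a product of two distinct linear factors because $p\nmid t$; both roots $0$ and $t$ are simple, so Hensel's lemma (which applies verbatim for every prime, including $p=2$, since the derivative is a unit at each root) lifts each uniquely to a root mod $p^n$. Hence $\Omega/p^n$ has exactly two elements, one $\equiv0$ and one $\equiv t\bmod p$; under the trivial character $\bm1_p$ the former contributes $0$ and the latter, being prime to $p$, contributes $1$, so $c_{t,l,\phi}(p^n)=1$. As $c_{t,l,\phi}(1)=1$ by multiplicativity, we have $c_{t,l,\phi}(p^n)=1$ for all $n\ge0$. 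Since $p\mid l$, Lemma \ref{conv} gives $(\mu*\mu[l])(p^n)=-\delta_{n,1}$, so the convolution telescopes:
\[(\mu*\mu[l]*c_{t,l,\phi})(p^n)=c_{t,l,\phi}(p^n)-c_{t,l,\phi}(p^{n-1})=0\qquad(n\ge1).\]

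Finally, letting $a\ge1$ be the exponent of $p$ in $N$, multiplicativity shows that $(\mu*\mu[l]*c_{t,l,\phi})(N)$ contains the factor $(\mu*\mu[l]*c_{t,l,\phi})(p^a)=0$, so it vanishes for every $\phi\mid m$; summing against the constants $b_{t,l,\phi}$ gives $\Lambda_{t,l}(N)=0$. I expect the only delicate point to be the local count defining $c_{t,l,\phi}$: one must verify that modulo $p$ the quadratic really splits into distinct linear factors and that precisely one of the two lifted roots is a $p$-adic unit. Both facts rest squarely on the hypotheses $p\mid l$ and $p\nmid t$, which is why the condition $(l,N)\nmid t$ is exactly what drives the vanishing.
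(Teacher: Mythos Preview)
Your argument is correct and follows the same line as the paper's: pick a prime $p\mid(l,N)$ with $p\nmid t$, note $p\nmid d\phi$ so $\Omega'=\emptyset$, compute $c_{t,l,\phi}$ at $p$-powers, and use Lemma~\ref{conv} with $p\mid l$ to kill the local factor of the multiplicative function $\mu*\mu[l]*c_{t,l,\phi}$.

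There is one small but genuine difference worth pointing out. The paper invokes the hypothesis $(l,N/\!\!/l)=1$ to observe that $p^2\nmid N$, so it only needs $c_{t,l,\phi}(p)=1$ and hence $(\mu*\mu[l]*c_{t,l,\phi})(p)=1-1=0$. You instead compute $c_{t,l,\phi}(p^n)=1$ for \emph{all} $n\ge1$ via Hensel's lemma, obtaining $(\mu*\mu[l]*c_{t,l,\phi})(p^n)=0$ for every $n\ge1$. Your route therefore never uses the assumption $(l,N/\!\!/l)=1$ at all, so you have actually proved the slightly stronger statement that $\Lambda_{t,l}(N)=0$ whenever $l>1$ is square-free and $(l,N)\nmid t$. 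The paper's shortcut is cheaper (no Hensel needed), while yours shows the extra hypothesis is superfluous for this proposition.
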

\begin{proof}
We see that there exists a prime number $p$ such that $p|(l,N)$ and $p\nmid t$. We note $p^2\nmid N$ since $(l,N/\!\!/l)=1$. We show for each $\phi|m(t,l)$, $(\mu*\mu[l]*c_{t,l,\phi})(p)=0$. Indeed, we see $p\nmid (t^2-4l)$ and
\[\Omega=\{\xi\in\Z_p \,|\, \xi^2-t\xi\equiv0 \bmod p\Z_p\}=\{\xi\in\Z_p \,|\, \xi\equiv0,t \bmod p\Z_p\},\]
$\Omega'=\emptyset$, hence $c_{t,l,\phi}(p)=\bm1_p(0)+\bm1_p(t)=1$. We get the assertion by Lemma \ref{conv}.
\end{proof}
\begin{Prop}\label{Lambda}
Suppose that $l>1$ is square-free and $(l,N)|t$. Put $d=d(t,l)$ and $m=m(t,l)$. If $m=1$, then we have
\[\Lambda_{t,l}(N)=K_d(N).\]
If $m$ is prime, then putting $v_m(N)=\max\{n \,|\, N\in m^n\Z\}$ we have
\[\Lambda_{t,l}(N)=K_d(Nm^{-v_m(N)})\times\BC m+1-(\frac dm)&\text{if }v_m(N)=0,\\
(\frac dm)-1&\text{if }v_m(N)=1,\\
m^2-2m-1+(\frac dm)&\text{if }v_m(N)=2,\\
(m-(\frac dm))(m-1)((\frac dm)-1)&\text{if }v_m(N)=3,\,m\nmid d,\\
-(m-(\frac dm))m(\frac dm)&\text{if }v_m(N)=4,\,m\nmid d,\\
1-m^2&\text{if }v_m(N)=3,\,m|d,\\
m(1-m)&\text{if }v_m(N)=4,\,m|d,\\
m^2&\text{if }v_m(N)=5,\,m|d,\\
0&\text{otherwise}.\EC\]
\end{Prop}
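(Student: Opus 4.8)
The plan is to reduce $\Lambda_{t,l}(N)$ to purely local root counts, one prime at a time, and then to isolate the distinguished prime $m$. Write $F(X)=X^2-tX+l$, whose discriminant is $t^2-4l=dm^2$, and set $D_\phi=\mu*\mu[l]*c_{t,l,\phi}$. Each $D_\phi$ is multiplicative, being a convolution of multiplicative functions, so it suffices to understand the values $D_\phi(p^n)$ at prime powers. When $m=1$ the only divisor is $\phi=1$ with $b_{t,l,1}=1$, so $\Lambda_{t,l}=D_1$ is itself multiplicative and $\Lambda_{t,l}(N)=\prod_{p\mid N}D_1(p^{v_p(N)})$; the whole assertion then follows from a single local lemma.

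First I would prove that local lemma: for each prime $p\ne m$ dividing $N$ (in particular for every $p\mid N$ when $m=1$) one has $D_\phi(p^{v_p(N)})=K_d(p^{v_p(N)})$. The key observation is that $\psi=m/\phi$ is a unit in $\Z_p$ when $p\ne m$, so $\psi^2p^n\Z_p=p^n\Z_p$; hence $\Omega$, $\Omega'$ and the groupings ``mod $\psi p^n$'' depend only on $F\bmod p^\bullet$, and the local count is exactly that of a quadratic of fundamental discriminant $d$ at $p$. Concretely $c_{t,l,\phi}(p^n)$ counts the roots of $F$ modulo $p^n$ that are prime to $p$, corrected by the $\Omega'$-term when $p\mid d$. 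When $p\nmid d$ the roots are simple, so Hensel lifting gives $c(p^n)=1+(\frac dp)$ for all $n\ge1$; convolving with $\mu*\mu$ through Lemma \ref{conv} yields $(\frac dp)-1,\ -(\frac dp),\ 0$ for $n=1,2,\ge3$, i.e.\ $K_d(p^n)$. When $p\mid d$ and $p\nmid l$ one counts roots of a quadratic with a double root mod $p$, using that $d$ is fundamental (so $v_p(d)=1$ for odd $p$, with the usual modification at $p=2$) and adding the $\Omega'$-correction. When $p\mid l$ the hypotheses $(l,N/\!\!/l)=1$ and $(l,N)\mid t$ force $v_p(N)=1$ and $p\mid t$, whence $F\equiv X^2\bmod p$, $c(p)=0$, and $(\mu*\mu[l]*c)(p)=-1=K_d(p)$. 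In every case $D_\phi(p^{v_p(N)})=K_d(p^{v_p(N)})$.

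Granting the local lemma, for $m$ prime the two counting functions agree off $m$, i.e.\ $c_{t,l,1}(p^\bullet)=c_{t,l,m}(p^\bullet)$ for $p\ne m$, so for both $\phi$ the multiplicativity of $D_\phi$ gives $D_\phi(N)=D_\phi(m^{v_m(N)})\,K_d(Nm^{-v_m(N)})$. Since $b_{t,l,1}=1$ and $b_{t,l,m}=m-(\frac dm)$, this yields
\[\Lambda_{t,l}(N)=K_d(Nm^{-v_m(N)})\big[D_1(m^{v_m(N)})+(m-(\z\frac dm))D_m(m^{v_m(N)})\big].\]
It then remains to compute the bracket $g_m(n):=D_1(m^n)+(m-(\frac dm))D_m(m^n)$ and match it against the table in the statement; as a sanity check, at $n=0$ one has $D_\phi(1)=1$, so $g_m(0)=1+(m-(\frac dm))=m+1-(\frac dm)$, the first row.

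The heart of the argument, and the main obstacle, is this last local computation at the distinguished prime $m$, where $v_m(t^2-4l)=v_m(d)+2$ so that $F$ acquires genuine $m$-adic ramification. Here I must carefully unwind both constructions: for $\phi=m$ one has $\psi=1$, with $\Omega$ taken mod $m^n$ and an $\Omega'$-term (over solutions mod $m^{n+1}$) always present since $m\mid dm$; for $\phi=1$ one has $\psi=m$, with $\Omega$ taken from solutions mod $m^{n+2}$ but grouped mod $m^{n+1}$, and an $\Omega'$-term present only when $m\mid d$. The computation splits according to $m\nmid d$ versus $m\mid d$ (equivalently $v_m(d)=0$ or $1$ for odd $m$); the reduced quadratic has a double root $\xi_0=t/2$ with $F(\xi_0)=-dm^2/4$, and the $m$-adic valuation of this value controls how far $\xi_0$ lifts and hence the root counts at each precision. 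The cases $m\mid l$ (a short check shows this forces $m=2$) and $p=2$ require the usual separate treatment. Convolving the resulting counts with $\mu*\mu$ (or $\mu*\mu[l]$ when $m\mid l$), forming $g_m(n)$ for $n=0,1,2,3,4,5$, and showing $g_m(n)=0$ for larger $n$, reproduces each row of the table; the $m=1$ assertion is the special case with no $m$-factor.
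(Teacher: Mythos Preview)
Your proposal is correct and follows essentially the same route as the paper: reduce to prime powers via the multiplicativity of $\mu*\mu[l]*c_{t,l,\phi}$, show that at every prime $p\ne m$ (including $p\mid(l,N)$) the local factor equals $K_d(p^{v_p(N)})$, and then carry out the explicit root counts at $p=m$ for $\phi=1$ and $\phi=m$ separately, split according to whether $m\mid d$. The paper packages the off-$m$ computation as its Lemma~\ref{p} (for $p\nmid lm$) together with a direct check for $p\mid(l,N)$, and the $m$-local computation as its Lemma~\ref{p2}; your organization differs only cosmetically, and your side remark that $m\mid l$ forces $m=2$ is a case the paper's Lemma~\ref{p2} (which assumes $m\nmid l$) leaves implicit.
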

\begin{proof}For each $p|(l,N)$ and $\phi|m$, we see $c_{t,l,\phi}(p)=0$ since
\[\Omega\subset\{\xi\in\Z_p \,|\, \xi^2\equiv0 \bmod p\Z_p\}=p\Z_p,\]
\[\Omega'\subset\{\xi\in\Z_p \,|\, \xi^2-t\xi+l\equiv0 \bmod p^2\Z_p\}=\emptyset,\]
and thus $(\mu*\mu[l]*c_{t,l,\phi})(p)=-1$ by Lemma \ref{conv}. Therefore, the first assertion follows from Lemma \ref{p} below and
\[\Lambda_{t,l}=\mu*\mu[l]*c_{t,l,1}.\]
The second one also follows from Lemmas \ref{p}, \ref{p2} below and
\[\Lambda_{t,l}=\mu*\mu[l]*c_{t,l,1}+\big(m-(\z\frac dm)\big)\mu*\mu[l]*c_{t,l,m}.\]
\end{proof}

In Lemmas \ref{p} and \ref{p2}, assume that $l>1$ is square-free, $t\in\T(l)$, $n\ge1$, and $p$ is a prime number, and put $d=d(t,l)$ and $m=m(t,l)$. We note $d\equiv0,1\bmod4$ and $t^2-4l=dm^2$.

\begin{Lem}\label{p}
If $p\nmid lm$, then for each $\phi|m$, we have
\[(\mu*\mu[l]*c_{t,l,\phi})(p^n)=K_d(p^n).\]
\end{Lem}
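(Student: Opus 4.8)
The plan is to reduce everything to a count of roots of the quadratic $f(\xi)=\xi^2-t\xi+l$ modulo powers of $p$, and then convolve. Since $\mu$, $\mu[l]$ and $c_{t,l,\phi}$ are all multiplicative, so is the convolution, so it suffices to evaluate at $p^n$. As $p\nmid l$, Lemma \ref{conv} gives $(\mu*\mu[l])(p^i)=(\mu*\mu)(p^i)$, which equals $1,-2,1$ for $i=0,1,2$ and $0$ for $i\ge3$; hence
\[(\mu*\mu[l]*c_{t,l,\phi})(p^n)=c_{t,l,\phi}(p^n)-2\,c_{t,l,\phi}(p^{n-1})+c_{t,l,\phi}(p^{n-2}),\]
with the convention $c_{t,l,\phi}(p^{-1})=0$ and $c_{t,l,\phi}(1)=1$. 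Thus the whole computation rests on a clean formula for $c_{t,l,\phi}(p^j)$.

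First I would simplify $c_{t,l,\phi}(p^j)$ using $p\nmid m$. Since $\phi\mid m$, both $\phi$ and $\psi=m/\phi$ are $p$-adic units, so $\psi^2p^j\Z_p=p^j\Z_p$ and the defining congruences become $f(\xi)\equiv0\bmod p^j$ (for $\Omega$) and $\bmod p^{j+1}$ (for $\Omega'$). Because $p\nmid l$, no root of $f$ modulo a power of $p$ is divisible by $p$ (a root with $p\mid\xi$ would force $p\mid l$), so $\bm1_p(\xi)=1$ throughout the first sum; likewise $\xi\equiv t\bmod p$ gives $f(\xi)\equiv l\not\equiv0\bmod p$, so every element of $\Omega'$ satisfies $\bm1_p(t-\xi)=1$. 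Next I would dispose of $\Omega'$ entirely: it is nonempty only when $p\mid d\phi$, i.e. (as $p\nmid\phi$) when $p\mid d$, and even then it consists of roots of $f$ modulo $p^{j+1}$ with $j+1\ge2$, which I will show do not exist in the ramified case. Consequently $c_{t,l,\phi}(p^j)=R_j$, the number of roots of $f$ in $\Z/p^j\Z$, independent of $\phi$ — already matching the $\phi$-independence of $K_d$.

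The core step is to count $R_j$ according to the splitting type of $p$, governed by $(\frac dp)$; here $t^2-4l=dm^2$ and $p\nmid m$, so $v_p(t^2-4l)=v_p(d)$. When $(\frac dp)=1$ the quadratic has two distinct simple roots mod $p$ (the discriminant is a $p$-adic unit), each lifting uniquely by Hensel, so $R_j=2$ for all $j\ge1$. When $(\frac dp)=-1$ there are no roots mod $p$, so $R_j=0$. When $p\mid d$ the completed square $4f(\xi)=(2\xi-t)^2-dm^2$ (for odd $p$) forces a valuation-parity obstruction: $v_p(dm^2)=v_p(d)=1$ is odd while $(2\xi-t)^2$ has even valuation, so there is a single root mod $p$ and none mod $p^2$, giving $R_1=1$ and $R_j=0$ for $j\ge2$ and confirming $\Omega'=\emptyset$. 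Substituting each of these three root-count profiles into the displayed convolution and comparing with the case-by-case definition of $K_d(p^n)$ yields the claim in every case.

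The main obstacle is the prime $p=2$, where completing the square is unavailable and the Kronecker symbol depends on $d\bmod 8$. There I would argue $2$-adically and directly: when $2\nmid d$ the constraint $t^2-4l=dm^2$ with $l,m$ odd forces $d\equiv5\bmod 8$ (the split case $d\equiv1\bmod 8$ simply cannot occur), so only the inert profile arises; and when $2\mid d$ a short computation with $\xi=1+2u$ shows $f\equiv0\bmod4$ is unsolvable, recovering $R_1=1$, $R_j=0$ for $j\ge2$. Once these $2$-adic counts are in hand, the verification against $K_d(2^n)$ is identical to the odd case.
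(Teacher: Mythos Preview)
Your proposal is correct and follows essentially the same route as the paper: both arguments first reduce $c_{t,l,\phi}(p^n)$ to a root count for $\xi^2-t\xi+l$ modulo $p^n$ (using that $\psi$ is a $p$-adic unit and that $\Omega'$ contributes nothing), obtain the values $1+(\tfrac dp)$ when $p\nmid d$ and $\delta_{n,1}$ when $p\mid d$, treat $p=2$ by the observation that $d\equiv5\bmod8$ is forced in the unramified case, and then finish by convolving with $\mu*\mu$ via Lemma~\ref{conv}. Your write-up makes the $\phi$-independence and the vanishing of the $\Omega'$ term more explicit than the paper does, but the strategy and the computations are the same.
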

\begin{proof}We show
\[c_{t,l,\phi}(p^n)=\BC 1+(\frac dp)&\text{if }p\nmid d, \\ \delta_{n,1}&\text{if }p|d,\EC\]
then we get the assertion by Lemma \ref{conv}. First, suppose $p\nmid d$. If $p\ne2$, then we see
\[c_{t,l,\phi}(p^n)=\#(\{\xi\in\Z_p \,|\, (\xi-\z\frac t2)^2\equiv\frac{dm^2}4 \bmod p^n\}/p^n)=1+(\frac dp).\]
If $p=2$, then we see $d\equiv dm^2\equiv t^2-4\equiv5\bmod8$ and
\[c_{t,l,1}(2)=\#\{\xi\in\Z_2 \,|\, \xi^2-\xi+1\equiv0 \bmod 2\}/2)=0=1+(\z\frac d2).\]
We easily see $c_{t,l,1}(2^n)=0$ for $n\ge2$. Next, suppose $p|d$. If $p\ne2$, then we see
\begin{align*}c_{t,l,1}(p)&=\#(\{\xi\in\Z_p \,|\, (\xi-\z\frac t2)^2\equiv0 \bmod p\}/p)\\[-2pt]
&\6+\#(\{\xi\in\Z_p \,|\, (\xi-\z\frac t2)^2\equiv\frac{dm^2}4 \bmod p^2\}/p)\\
&=1+0,\end{align*}
and $c_{t,l,1}(p^n)=0$ for $n\ge2$. If $p=2$, then we see $2|t$, $\frac d4\equiv2,3\bmod4$ and
\begin{align*}c_{t,l,1}(2)&=\#(\{\xi\in\Z_2 \,|\, \xi^2+1\equiv0 \bmod 2\}/2)\\[-2pt]
&\6+\#(\{\xi\in\Z_2 \,|\, (\xi-\z\frac t2)^2\equiv\frac{dm^2}4 \bmod 4\}/2)\\
&=1+0,\end{align*}
and $c_{t,l,1}(2^n)=0$ for $n\ge2$.
\end{proof}

\begin{Lem}\label{p2}
If $m$ is prime and $m\nmid l$, then we have
\[(\mu*\mu[l]*c_{t,l,1})(m^n)=K_d(m^n),\]
and
\[(\mu*\mu[l]*c_{t,l,m})(m^n)=\BC0&\text{if }n=1,\\
m-2+(\frac dm)&\text{if }n=2,\\
(m-1)((\frac dm)-1)&\text{if }n=3,\,m\nmid d,\\
-m(\frac dm)&\text{if }n=4,\,m\nmid d,\\
-m&\text{if }n=3,\,m|d,\\
1-m&\text{if }n=4,\,m|d,\\
m&\text{if }n=5,\,m|d,\\
0&\text{otherwise}.\EC\]
\end{Lem}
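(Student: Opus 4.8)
The goal is to compute the two convolutions $\mu*\mu[l]*c_{t,l,1}$ and $\mu*\mu[l]*c_{t,l,m}$ evaluated at prime powers $m^n$, under the standing hypotheses that $m=m(t,l)$ is prime and $m\nmid l$. Since all the functions involved are multiplicative, it suffices to work locally at the prime $m$; concretely I need to compute the values $c_{t,l,1}(m^n)$ and $c_{t,l,m}(m^n)$ for all $n\ge1$, and then apply Lemma~\ref{conv} to convolve with $\mu*\mu[l]$. Because $m\nmid l$, the weight $\mu[l]$ does nothing at the prime $m$, so $(\mu*\mu[l])(m^n)=(\mu*\mu)(m^n)$, which by Lemma~\ref{conv} equals $-2,1,0$ for $n=1,2,\ge3$. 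Thus each target value at $m^n$ is the finite sum $\sum_{j}(\mu*\mu)(m^{n-j})\,c(m^j)=c(m^n)-2c(m^{n-1})+c(m^{n-2})$ (with the convention $c(m^0)=1$), so the whole problem reduces to a clean evaluation of the local counting functions $c_{t,l,\phi}(m^j)$ for $\phi\in\{1,m\}$.

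\textbf{Computing the local factors.}
First I would treat $c_{t,l,1}(m^n)$. Here $\psi=m/\phi=m$, so $\Omega$ counts solutions of $\xi^2-t\xi+l\equiv0\bmod m^2m^n$ in $\Z_m$ modulo $m^n$, and $\Omega'$ similarly mod $m^{2}m^{n+1}$ when $m|d$. Completing the square gives $(\xi-\tfrac t2)^2\equiv\tfrac{dm^2}4\bmod m^{n+2}$ (handling $m=2$ separately via the congruence conditions on $d$ and $t$ already used in the proof of Lemma~\ref{p}). Since the right-hand side has exact $m$-adic valuation $2$, Hensel-type counting of square roots mod $m^{n+2}$ produces a value depending on whether $m\nmid d$ or $m|d$. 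The expectation is that $c_{t,l,1}(m^n)$ will match the pattern already seen in Lemma~\ref{p}: namely it behaves like $K_d$ at $m$ once convolved, so the first displayed identity $(\mu*\mu[l]*c_{t,l,1})(m^n)=K_d(m^n)$ follows by the same bookkeeping as Lemma~\ref{p}, the only difference being that $m$ now divides the conductor $m=m(t,l)$ rather than being coprime to it.

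\textbf{The harder factor and the obstacle.}
For $c_{t,l,m}(m^n)$ the shift $\psi=m/\phi=1$ changes the moduli to $m^n$ (and $m^{n+1}$ for $\Omega'$), so I must count roots of $\xi^2-t\xi+l$ modulo $m^n$ directly, where $t^2-4l=dm^2$ now forces a double root structure. This is where the real work lies: the square root $(\xi-\tfrac t2)^2\equiv \tfrac{dm^2}4 \bmod m^n$ has its behavior governed by the interplay between $n$ and the valuation $2+v_m(d)$ of the right-hand side, which is exactly why the answer splits into the many cases $n=2,3,4,5$ with the $m\nmid d$ versus $m|d$ dichotomy. The main obstacle is therefore the careful root-counting of a quadratic with a repeated root over $\Z/m^n\Z$ for small $n$, tracking the extra $\Omega'$ contribution precisely when $m|d\phi$; the case $m=2$ is delicate and must be checked by hand using $d\equiv0,1\bmod4$ and $t^2-4l=dm^2$. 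Once the values $c_{t,l,m}(m^j)$ are tabulated for $j\le 5$, the second formula follows mechanically by forming $c(m^n)-2c(m^{n-1})+c(m^{n-2})$ and simplifying, matching it against the stated case list.
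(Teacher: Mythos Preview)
Your plan is correct and coincides with the paper's own proof: the paper likewise splits into the cases $m=2$ versus $m\neq2$ and $m\nmid d$ versus $m\mid d$, computes $c_{t,l,1}(m^n)$ and $c_{t,l,m}(m^n)$ explicitly by counting solutions of the completed-square congruence $(\xi-\tfrac t2)^2\equiv\tfrac{dm^2}4$ modulo the appropriate power of $m$, and then invokes Lemma~\ref{conv} to pass to $(\mu*\mu)*c$. One small slip to fix when you write it up: for $\phi=1$ the quotient in the definition is by $\psi m^n=m^{n+1}$ (not $m^n$), and for $\phi=m$ the set $\Omega'$ is always present since $m\mid d\phi=dm$; neither affects your overall strategy.
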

\begin{proof}
The case $m=2$: We note $2|t$. If $2\nmid d$, then we see $4|t$,
\[c_{t,l,1}(2^n)=\#(\{\xi\in\Z_2 \,|\, (\xi-\z\frac t2)^2\equiv d \bmod 2^{n+2}\}/2^{n+1})=1+(\z\frac d2),\]
and
\begin{align*}c_{t,l,2}(2^n)&=\#(\{\xi\in\Z_2 \,|\, (\xi-\z\frac t2)^2\equiv d \bmod 2^n\}/2^n)\\[-2pt]
&\6+\#(\{\xi\in\Z_2 \,|\, (\xi-\z\frac t2)^2\equiv d \bmod 2^{n+1}\}/2^n)\\
&=\BC 2&\text{if }n=1,\\3+(\frac d2)&\text{if }n=2,\\3(1+(\frac d2))&\text{if }n\ge3.\EC\end{align*}
If $2|d$, then we see $4\nmid t,\,\frac d4\equiv2,3\bmod4$,
\begin{align*}c_{t,l,1}(2^n)&=\#(\{\xi\in\Z_2 \,|\, (\xi-\z\frac t2)^2\equiv d\bmod 2^{n+2}\}/2^{n+1})\\[-2pt]
&\6+\#(\{\xi\in\Z_2 \,|\, (\xi-\z\frac t2)^2\equiv d\bmod 2^{n+3}\}/2^{n+1})\\
&=\delta_{n,1},\end{align*}
and
\begin{align*}c_{t,l,2}(2^n)&=\#(\{\xi\in\Z_2 \,|\, (\xi-\z\frac t2)^2\equiv d \bmod 2^n\}/2^n)\\[-2pt]
&\6+\#(\{\xi\in\Z_2 \,|\, (\xi-\z\frac t2)^2\equiv d \bmod 2^{n+1}\}/2^n)\\
&=\BC 2&\text{if }n=1,3,\\3&\text{if }n=2,\\
0&\text{if }n\ge4.\EC\end{align*}
The case $m\neq2$: If $m\nmid d$, then we see
\[c_{t,l,1}(m^n)=\#(\{\xi\in\Z_m \,|\, (\xi-\z\frac t2)^2\equiv\frac d4 \bmod m^n\}/m^n)=1+(\z\frac d2),\]
\begin{align*}c_{t,l,m}(m^n)&=\#(\{\xi\in\Z_m \,|\, (\xi-\z\frac t2)^2\equiv\frac{dm^2}4\bmod m^n\}/m^n)\\[-2pt]
&\6+\#(\{\xi\in\Z_m \,|\, (\xi-\z\frac t2)^2\equiv\frac{dm^2}4\bmod m^{n+1}\}/m^n)\\
&=\BC2&\text{if }n=1,\\
m+1+(\frac dm)&\text{if }n=2,\\
(m+1)(1+(\frac dm))&\text{if }n\ge3.\EC\end{align*}
If $m|d$, then we see
\begin{align*}c_{t,l,1}(m^n)&=\#(\{\xi\in\Z_m \,|\, (\xi-\z\frac t2)^2\equiv\frac d4\bmod m^n\}/m^n)\\[-2pt]
&\6+\#(\{\xi\in\Z_m \,|\, (\xi-\z\frac t2)^2\equiv\frac d4\bmod m^{n+1}\}/m^n)\\
&=\delta_{n,1},\end{align*}
\begin{align*}c_{t,l,m}(m^n)&=\#(\{\xi\in\Z_m \,|\, (\xi-\z\frac t2)^2\equiv\frac{dm^2}4\bmod m^n\}/m^n)\\[-2pt]
&\6+\#(\{\xi\in\Z_m \,|\, (\xi-\z\frac t2)^2\equiv\frac{dm^2}4\bmod m^{n+1}\}/m^n)\\
&=\BC2&\text{if }n=1,\\
m+1&\text{if }n=2,\\
m&\text{if }n=3,\\
0&\text{if }n\ge4.\EC\end{align*}
We get all the assertions by Lemma \ref{conv}.
\end{proof}

\section{Examples of trace on $\cS_k^0(N)$}

In this section, we give some concrete formulas. We first note $a_{-t,k,l}=a_{t,k,l}$, $h_{-t,l}=h_{t,l}$, $\Lambda_{-t,l}=\Lambda_{t,l}$, and
\[a_{0,l,k}=\frac{(-l)^{\frac{k-1}2}-(-l)^{\frac{k-1}2}}{(-l)^{\frac12}-(-l)^{\frac12}}=\frac{(-l)^\vk-(-l)^\vk}{1-(-1)}=(-l)^\vk.\]

\begin{Lem}\label{as}
If $l$ is prime, then we have $\T_\square(l)=\{\pm(l+1)\}$, $m(l+1,l)=l-1$ and $a_{l+1,l,k}=\z\frac1{2(l-1)}$.
\end{Lem}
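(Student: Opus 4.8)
The plan is to verify the three assertions by direct computation, using the symmetry $a_{-t,l,k}=a_{t,l,k}$ noted just above together with the fact that $(-t)^2-4l=t^2-4l$ (so $d$ and $m$ depend only on $t^2$); this lets me restrict attention to $t=l+1$ for the last two claims. First I would determine $\T_\square(l)$ by writing $t^2-4l=s^2$ for an integer $s\ge0$, so that $(t-s)(t+s)=4l$. The two factors differ by the even number $2s$ and hence share the same parity; since their product $4l$ is even, both are even, say $t-s=2a$ and $t+s=2b$, which reduces the equation to $ab=l$. As $l$ is prime, the only factorizations over $\Z$ are $\{a,b\}=\{1,l\}$ and $\{a,b\}=\{-1,-l\}$ (up to the sign of $s$), giving $t=a+b=\pm(l+1)$; conversely $(l+1)^2-4l=(l-1)^2$ is a square, so $\T_\square(l)=\{\pm(l+1)\}$. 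The one step needing a little care is this enumeration: the parity argument must genuinely force both factors of $4l$ to be even, which is what correctly excludes $t=0$ (indeed $4l$ is not a square, as $l$ is prime), and one must check no further integer factorizations of the prime $l$ arise.

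Next, for $t=l+1$ I would use the identity $t^2-4l=(l-1)^2$ obtained above. Being a perfect square, it gives $\Q(\sqrt{t^2-4l})=\Q$, whose discriminant is $1$; hence $d(l+1,l)=1$ and $m(l+1,l)=\sqrt{(l-1)^2/1}=l-1$, as claimed.

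Finally, for $a_{l+1,l,k}$ I would factor the quadratic $X^2-(l+1)X+l=(X-1)(X-l)$, so its roots are $\zeta=l$ and $\eta=1$. Since $k$ is an even integer with $k\ge2$, we have $l^{k-1}\ge l>1$, so $\min\{\zeta^{k-1},\eta^{k-1}\}=1$ and $|\zeta-\eta|=l-1$; the $\T_\square(l)$-formula for $a_{t,l,k}$ then yields $a_{l+1,l,k}=\z\frac1{2(l-1)}$. All three computations are elementary, with the enumeration of $\T_\square(l)$ being the only place where the hypothesis that $l$ is prime is essential.
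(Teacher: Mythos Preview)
Your proof is correct and follows the same route as the paper's: write $t^2-4l$ as a square and factor $4l$ as a product of two integers of equal parity, then use primality of $l$ to pin down $t=\pm(l+1)$, after which $m(l+1,l)$ and $a_{l+1,l,k}$ follow from $X^2-(l+1)X+l=(X-1)(X-l)$. The paper's version is much terser (it jumps directly from $(t+m)(t-m)=4l$ to $t=l+1$, $m=l-1$), while you spell out the parity reduction and the enumeration of factorizations explicitly; the underlying argument is the same.
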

\begin{proof}
If $t^2-4l=m^2$, then we see $(t+m)(t-m)=4l$, thus $t=l+1$, $m=l-1$ and $X^2-tX+l=(X-1)(X-l)$.
\end{proof}

\begin{Prop}
If $(2,N/\!\!/2)=1$ i.e. $4\nmid N$, then we get
\begin{align*}\tr(\iT(2)|_{\cS_k^0(N)})&=-\z\frac12(-2)^\vk\big(K_{-2}(N)+A_kK_{-4}(N)\big)-a_{1,2,k}K_{-7}(N)\\
&\6-K_1(N)+\delta_{k,2}\mu(N)\VP{p\in\P(2/\!\!/N)}(1+p),\end{align*}
where
\[A_k=\BC 1 & \textit{if }k\equiv0,2 \bmod 8, \\ -1 & \textit{if }k\equiv4,6\bmod8.\EC\]
\end{Prop}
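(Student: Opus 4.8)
The plan is to specialize Theorem \ref{1} to $l=2$ and evaluate each ingredient explicitly. Since $2$ is prime, Lemma \ref{as} gives $\T_\square(2)=\{\pm3\}$ with $m(3,2)=1$ and $a_{3,2,k}=\frac12$, while $\T_-(2)=\{t\in\Z\mid t^2<8\}=\{0,\pm1,\pm2\}$; hence $\T(2)=\{0,\pm1,\pm2,\pm3\}$. Evaluating $t^2-8$ at $t=0,1,2$ gives $-8,-7,-4$, the discriminants of $\Q(\sqrt{-2})$, $\Q(\sqrt{-7})$, $\Q(\sqrt{-1})$; in particular $m(t,2)=1$ for every $t\in\T(2)$, so by Proposition \ref{Lambda} we have $\Lambda_{t,2}(N)=K_{d(t,2)}(N)$ whenever $(2,N)\mid t$.

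Next I would record the arithmetic constants. From the class numbers $h(-8)=h(-7)=h(-4)=1$ and the unit counts $2,2,4$ we get $h_{0,2}=h_{1,2}=\frac12$, $h_{2,2}=\frac14$, and $h_{3,2}=1$ by the convention on $\T_\square$. The $a$-factors are $a_{0,2,k}=(-2)^\vk$ (the computation at the start of this section) and $a_{3,2,k}=\frac12$ (Lemma \ref{as}), while $a_{1,2,k}$ is left unsimplified. Using the symmetries $a_{-t,l,k}=a_{t,l,k}$, $h_{-t,l}=h_{t,l}$, $\Lambda_{-t,l}=\Lambda_{t,l}$ noted above, I pair the terms $\pm t$ and substitute into Theorem \ref{1}.

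The only nonroutine step is $a_{2,2,k}$. The roots of $X^2-2X+2$ are $1\pm i=\sqrt2\,e^{\pm i\pi/4}$, so $a_{2,2,k}=\frac{(1+i)^{k-1}-(1-i)^{k-1}}{2i}=2^{(k-1)/2}\sin\frac{(k-1)\pi}4$. Reducing $k$ modulo $8$ and comparing with $(-2)^\vk=(-1)^\vk 2^\vk$, one verifies $2^{(k-1)/2}\sin\frac{(k-1)\pi}4=(-2)^\vk A_k$ with $A_k$ as stated. This is where the four classes $k\equiv0,2,4,6\bmod8$ must be handled and where sign slips are easy, so I regard it as the crux.

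Finally I would collect the pieces. The $t=0$ term contributes $-(-2)^\vk\cdot\frac12 K_{-8}(N)$ and the pair $t=\pm2$ contributes $-2(-2)^\vk A_k\cdot\frac14 K_{-4}(N)=-\frac12(-2)^\vk A_k K_{-4}(N)$; using $K_{-8}=K_{-2}$ (both realize the same Kronecker character, since $(\frac{-8}p)=(\frac{-2}p)$ for odd $p$ and both are divisible only by $2$) these combine to $-\frac12(-2)^\vk(K_{-2}(N)+A_k K_{-4}(N))$. The pairs $t=\pm1$ and $t=\pm3$ give $-a_{1,2,k}K_{-7}(N)$ and $-K_1(N)$, and the Eisenstein term is copied verbatim from Theorem \ref{1}. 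No separate discussion of the parity of $N$ is needed: when $N$ is even the hypothesis $4\nmid N$ makes the odd-$t$ terms vanish by Proposition \ref{Lambda0}, which is automatic here because $K_{-7}(2)=K_1(2)=0$.
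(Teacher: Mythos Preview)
Your proof is correct and follows essentially the same route as the paper's: specialize Theorem \ref{1} to $l=2$, list $\T_-(2)=\{0,\pm1,\pm2\}$, record $h_{0,2}=h_{1,2}=\tfrac12$, $h_{2,2}=\tfrac14$, and verify $a_{2,2,k}=(-2)^\vk A_k$ as the one genuine computation. Your write-up is more explicit than the paper's---you spell out $m(t,2)=1$ and invoke Proposition \ref{Lambda}, justify the identification $K_{-8}=K_{-2}$, and explain why the odd-$t$ terms are compatible with Proposition \ref{Lambda0} for even $N$---but these are points the paper leaves implicit rather than handles differently.
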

\begin{proof}
We see $\T_-(2)=\{0,\pm1,\pm2\}$, $h_{0,2}=h_{1,2}=\z\frac12$, $h_{2,2}=\frac14$, and
\begin{align*}a_{2,2,k}&=\z\frac{\big(1+\sqrt{-1}\big)^{k-1}-\big(1-\sqrt{-1}\big)^{k-1}}{2\sqrt{-1}}\\
&=\z\frac{\big(\sqrt{-2}\big)^{k-1}}{2\sqrt{-1}}\Big(\big(\frac{1+\sqrt{-1}}{\sqrt2}\big)^{k-1}-\big(\frac{-1+\sqrt{-1}}{\sqrt2}\big)^{k-1}\Big)\\
&=(-2)^\vk A_k.\end{align*}
\end{proof}
We note $a_{1,2,k}=\frac{\big(\frac{1+\sqrt{-7}}2\big)^{k-1}-\big(\frac{1-\sqrt{-7}}2\big)^{k-1}}{\sqrt{-7}}$.
\begin{ex} We write down the following formulas:
\begin{align*}\tr(\iT(2)|_{\cS^0_k(1)})&=-\z\frac12(-2)^\vk(1+A_k)-a_{1,2,k}-1+3\delta_{k,2},\\
\tr(\iT(2)|_{\cS^0_k(2)})&=\z\frac12(-2)^\vk(1+A_k)-\delta_{k,2},\\
\tr(\iT(2)|_{\cS^0_k(3)})&=(-2)^\vk A_k+2a_{1,2,k}-3\delta_{k,2},\\
\tr(\iT(2)|_{\cS^0_k(5)})&=(-2)^\vk+2a_{1,2,k}-3\delta_{k,2},\\
\tr(\iT(2)|_{\cS^0_k(6)})&=-(-2)^\vk A_k+\delta_{k,2},\\
\tr(\iT(2)|_{\cS^0_k(7)})&=(-2)^\vk(1+A_k)+a_{1,2,k}-3\delta_{k,2},\\
\tr(\iT(2)|_{\cS^0_k(9)})&=\z\frac12(-2)^\vk(1-A_k)-a_{1,2,k}+1,\\
\tr(\iT(2)|_{\cS^0_k(10)})&=-(-2)^\vk+\delta_{k,2},\end{align*}\begin{align*}
\tr(\iT(2)|_{\cS^0_k(11)})&=(-2)^\vk A_k-3\delta_{k,2},\\
\tr(\iT(2)|_{\cS^0_k(13)})&=(-2)^\vk+2a_{1,2,k}-3\delta_{k,2},\\
\tr(\iT(2)|_{\cS^0_k(14)})&=-(-2)^\vk(1+A_k)+\delta_{k,2},\\
\tr(\iT(2)|_{\cS^0_k(15)})&=-4a_{1,2,k}+3\delta_{k,2},\\
\tr(\iT(2)|_{\cS^0_k(17)})&=2a_{1,2,k}-3\delta_{k,2},\\
\tr(\iT(2)|_{\cS^0_k(18)})&=-\z\frac12(-2)^{\vk}(1-A_k),\\
\tr(\iT(2)|_{\cS^0_k(19)})&=(-2)^\vk A_k+2a_{1,2,k}-3\delta_{k,2},\\
\tr(\iT(2)|_{\cS^0_k(21)})&=-2(-2)^\vk A_k-2a_{1,2,k}+3\delta_{k,2},\\
\tr(\iT(2)|_{\cS^0_k(22)})&=-(-2)^\vk A_k+\delta_{k,2},\\
\tr(\iT(2)|_{\cS^0_k(23)})&=(-2)^\vk(1+A_k)-3\delta_{k,2},\\
\tr(\iT(2)|_{\cS^0_k(25)})&=-\z\frac12(-2)^\vk(1-A_k)-a_{1,2,k}+1,\\
\tr(\iT(2)|_{\cS^0_k(26)})&=-(-2)^\vk+\delta_{k,2},\\
\tr(\iT(2)|_{\cS^0_k(27)})&=0,\\
\tr(\iT(2)|_{\cS^0_k(29)})&=(-2)^\vk-3\delta_{k,2},\\
\tr(\iT(2)|_{\cS^0_k(30)})&=-\delta_{k,2},\\
\tr(\iT(2)|_{\cS^0_k(31)})&=(-2)^\vk(1+A_k)+2a_{1,2,k}-3\delta_{k,2},\\
\tr(\iT(2)|_{\cS^0_k(33)})&=-2(-2)^\vk A_k+3\delta_{k,2},\\
\tr(\iT(2)|_{\cS^0_k(34)})&=\delta_{k,2},\\
\tr(\iT(2)|_{\cS^0_k(35)})&=-2(-2)^\vk-2a_{1,2,k}+3\delta_{k,2},\\
\tr(\iT(2)|_{\cS^0_k(38)})&=-(-2)^\vk A_k+\delta_{k,2},\\
\tr(\iT(2)|_{\cS^0_k(42)})&=2(-2)^\vk A_k-\delta_{k,2}.\end{align*}
\end{ex}

\begin{Prop}
If $(3,N/\!\!/3)=1$ i.e. $9\nmid N$, then we get
\begin{align*}\tr(\iT(3)|_{\cS_k^0(N)})&=-\z\frac16(-3)^\vk\big(\Lambda_{0,3}(N)+2B_kK_{-3}(N)\big)-a_{1,3,k}K_{-11}(N)-a_{2,3,k}K_{-2}(N)\\
&\6-\z\frac12\Lambda_{4,3}(N)+\delta_{k,2}\mu(N)\VP{p\in\P(3/\!\!/N)}(1+p),\end{align*}
where
\[B_k=\BC 1 & \textit{if }k\equiv0,2 \bmod 6, \\ -2 & \textit{if }k\equiv4\bmod6.\EC\]
In addition, we see
\[\Lambda_{0,3}(N)=K_{-3}(N2^{-v_2(N)})\times\BC4&\text{if }v_2(N)=0,\\
-2&\text{if }v_2(N)=1,2,\\
-6&\text{if }v_2(N)=3,\\
6&\text{if }v_2(N)=4,\\
0&\text{if }v_2(N)\ge5,\EC\]
\[\Lambda_{4,3}(N)=K_1(N2^{-v_2(N)})\times\BC 2&\text{if }v_2(N)=0,\\
-2&\text{if }v_2(N)=4,\\
0&\text{otherwise}.\EC\]
\end{Prop}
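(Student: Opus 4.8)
The plan is to specialize Theorem \ref{1} to $l=3$ and to read off every ingredient explicitly. First I would determine $\T(3)$. Since $t^2-12<0$ precisely for $|t|\le3$, we have $\T_-(3)=\{0,\pm1,\pm2,\pm3\}$, while Lemma \ref{as} gives $\T_\square(3)=\{\pm4\}$ with $m(4,3)=2$ and $a_{4,3,k}=\frac14$. Using the symmetries $a_{-t,3,k}=a_{t,3,k}$, $h_{-t,3}=h_{t,3}$, $\Lambda_{-t,3}=\Lambda_{t,3}$ and the value $a_{0,3,k}=(-3)^\vk$ recorded at the start of this section, the sum in Theorem \ref{1} collapses to $t\in\{0,1,2,3,4\}$, each term with $t\ne0$ acquiring a factor $2$.

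Next I would tabulate, for each such $t$, the value $t^2-12$, the field $\Q(\sqrt{t^2-12})$, the fundamental discriminant $d(t,3)$ and conductor $m(t,3)$, the unit-weighted class number $h_{t,3}$, and the coefficient $a_{t,3,k}$. Here $t^2-12$ runs through $-12,-11,-8,-3$ for $t=0,1,2,3$, giving $(d,m)=(-3,2),(-11,1),(-8,1),(-3,1)$ together with $h_{0,3}=h_{3,3}=\frac16$ and $h_{1,3}=h_{2,3}=\frac12$ (every class number involved equals $1$), while $t=4$ gives $(d,m)=(1,2)$ and $h_{4,3}=1$. For the three values $t=1,2,3$ with $m=1$, Proposition \ref{Lambda} yields $\Lambda_{t,3}=K_{d(t,3)}$; at this point I would note that $K_{-8}=K_{-2}$ as multiplicative functions, since they share the $p\mid d$ branch at $p=2$ and satisfy $(\frac{-8}{p})=(\frac{-2}{p})$ for odd $p$. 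Folding the contributions of $t=1,2,4$ then reproduces the terms $-a_{1,3,k}K_{-11}(N)$, $-a_{2,3,k}K_{-2}(N)$, and $-\frac12\Lambda_{4,3}(N)$, leaving a single $K_{-3}(N)$ term from $t=3$ to be combined with $t=0$.

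The one step with genuine content is the identification of that $K_{-3}$ coefficient, i.e. recombining the $t=0$ and $t=3$ pieces into $-\frac16(-3)^\vk\big(\Lambda_{0,3}(N)+2B_kK_{-3}(N)\big)$; this reduces to proving $a_{3,3,k}=(-3)^\vk B_k$. I would compute the roots of $X^2-3X+3$ as $\sqrt3\,e^{\pm i\pi/6}$, whence $a_{3,3,k}=3^\vk\,\frac{\sin((k-1)\pi/6)}{\sin(\pi/6)}$. Because $e^{i\pi/6}$ is a primitive $12$th root of unity this depends only on $k\bmod12$, and a check over the six even residues $k\equiv2,4,\dots,12$ confirms $\frac{\sin((k-1)\pi/6)}{\sin(\pi/6)}=(-1)^\vk B_k$, so that the period-$6$ pattern of $B_k$ and the parity $(-1)^\vk$ recombine with period $12$. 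This finite verification is the main (indeed the only) obstacle; everything else is substitution into results already proved.

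Finally I would derive the two explicit $\Lambda$-formulas by feeding the data into the prime-$m$ case of Proposition \ref{Lambda} with $m=2$. For $t=0$ one has $d=-3$, $(\frac{-3}{2})=-1$, $2\nmid d$, and the eight-case list simplifies to the values $4,-2,-2,-6,6,0$ for $v_2(N)=0,1,2,3,4,\ge5$, with prefactor $K_{-3}(N2^{-v_2(N)})$; for $t=4$ one has $d=1$, $(\frac12)=1$, $2\nmid d$, and the same list collapses to $2,0,0,0,-2,0$, nonzero only for $v_2(N)=0,4$, with prefactor $K_1(N2^{-v_2(N)})$. I would also remark that the hypothesis $9\nmid N$ is exactly what the argument needs: when $3\mid N$ the values $t=1,2,4$ fall under Proposition \ref{Lambda0} (as $3\nmid t$ and $3\nmid t^2-12$) and contribute $0$, consistent with $K_d(3)=0$ for $d=-11,-2,1$, so the stated closed forms remain valid throughout the allowed range. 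The constant term $\delta_{k,2}\mu(N)\prod_{p\in\P(3/\!\!/N)}(1+p)$ is carried over unchanged from Theorem \ref{1}.
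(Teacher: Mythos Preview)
Your proposal is correct and follows the same overall route as the paper: specialize Theorem \ref{1} to $l=3$, list $\T(3)$ and the data $(d,m,h,a)$ for each $t$, invoke Proposition \ref{Lambda} (and Proposition \ref{Lambda0} for the consistency at $3\mid N$), and identify $a_{3,3,k}$. Your derivation of the explicit $\Lambda_{0,3}$ and $\Lambda_{4,3}$ tables from Proposition \ref{Lambda} with $m=2$, and your observation that $K_{-8}=K_{-2}$, are exactly what is needed and are left implicit in the paper.

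The one place where the paper is slightly slicker is the evaluation of $a_{3,3,k}$. Rather than your trigonometric form plus a case check over $k\bmod12$, the paper factors the roots algebraically: writing $\frac{3\pm\sqrt{-3}}{2}=\sqrt{-3}\cdot\frac{1\mp\sqrt{-3}}{2}$, the second factor is a primitive sixth root of unity, so
\[
a_{3,3,k}=\frac{(\sqrt{-3})^{k-1}}{\sqrt{-3}}\Big(\omega^{k-1}-\bar\omega^{\,k-1}\Big)=(-3)^\vk B_k
\]
follows directly from the $6$-periodicity of $\omega^{k-1}-\bar\omega^{\,k-1}$, with no separate parity sign to track. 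Your finite verification is of course equally valid; the paper's version just makes the period-$6$ structure of $B_k$ visible without the check.
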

\begin{proof}
We see $\T_-(2)=\{0,\pm1,\pm2,\pm3\}$, $h_{0,3}=h_{3,3}=\z\frac16$, $h_{1,3}=h_{2,3}=\frac12$, and
\begin{align*}a_{3,3,k}&=\z\frac{\big(\frac{3+\sqrt{-3}}2\big)^{k-1}-\big(\frac{3-\sqrt{-3}}2\big)^{k-1}}{\sqrt{-3}}\\
&=\z\frac{\big(\sqrt{-3}\big)^{k-1}}{\sqrt{-3}}\Big(\big(\frac{1-\sqrt{-3}}2\big)^{k-1}-\big(\frac{-1-\sqrt{-3}}2\big)^{k-1}\Big)\\
&=B_k(-3)^\vk.\end{align*}
\end{proof}
We note $a_{1,3,k}=\frac{\big(\frac{1+\sqrt{-11}}2\big)^{k-1}-\big(\frac{1-\sqrt{-11}}2\big)^{k-1}}{\sqrt{-11}}$ and $a_{2,3,k}=\z\frac{\big(1+\sqrt{-2}\big)^{k-1}-\big(1-\sqrt{-2}\big)^{k-1}}{2\sqrt{-2}}$.
\begin{ex} We write down the following formulas:
\begin{align*}\tr(\iT(3)|_{\cS^0_k(1)})&=-\z\frac13(-3)^\vk(2+B_k)-a_{1,3,k}-a_{2,3,k}-1+4\delta_{k,2},\\
\tr(\iT(3)|_{\cS^0_k(2)})&=\z\frac13(-3)^\vk(1+2B_k)+2a_{1,3,k}+a_{2,3,k}-4\delta_{k,2},\\
\tr(\iT(3)|_{\cS^0_k(3)})&=\z\frac13(-3)^\vk(2+B_k)-\delta_{k,2},\\
\tr(\iT(3)|_{\cS^0_k(4)})&=\z\frac13(-3)^\vk(1-B_k)-a_{1,3,k}+a_{2,3,k},\\
\tr(\iT(3)|_{\cS^0_k(5)})&=\z\frac23(-3)^\vk(2+B_k)+2a_{2,3,k}-4\delta_{k,2},\\
\tr(\iT(3)|_{\cS^0_k(6)})&=-\z\frac13(-3)^\vk(1+2B_k)+\delta_{k,2},\\
\tr(\iT(3)|_{\cS^0_k(7)})&=2a_{1,3,k}+2a_{2,3,k}-4\delta_{k,2},\\
\tr(\iT(3)|_{\cS^0_k(8)})&=(-3)^\vk-a_{2,3,k},\\
\tr(\iT(3)|_{\cS^0_k(10)})&=-\z\frac23(-3)^\vk(1+2B_k)-2a_{2,3,k}+4\delta_{k,2},\\
\tr(\iT(3)|_{\cS^0_k(11)})&=\z\frac23(-3)^\vk(2+B_k)+a_{1,3,k}+4\delta_{k,2},\\
\tr(\iT(3)|_{\cS^0_k(12)})&=-\z\frac13(-3)^\vk(1-B_k),\\
\tr(\iT(3)|_{\cS^0_k(13)})&=2a_{1,3,k}+2a_{2,3,k}-4\delta_{k,2},\\
\tr(\iT(3)|_{\cS^0_k(14)})&=-4a_{1,3,k}-2a_{2,3,k}+4\delta_{k,2},\\
\tr(\iT(3)|_{\cS^0_k(15)})&=-\z\frac23(-3)^\vk(2+B_k)+\delta_{k,2},\\
\tr(\iT(3)|_{\cS^0_k(16)})&=-(-3)^\vk+1,\\
\tr(\iT(3)|_{\cS^0_k(17)})&=\z\frac23(-3)^\vk(2+B_k)+2a_{1,3,k}-4\delta_{k,2},\\
\tr(\iT(3)|_{\cS^0_k(19)})&=2a_{1,3,k}-4\delta_{k,2},\\
\tr(\iT(3)|_{\cS^0_k(20)})&=-\z\frac23(-3)^\vk(1-B_k)-2a_{2,3,k},\\
\tr(\iT(3)|_{\cS^0_k(21)})&=\delta_{k,2},\\
\tr(\iT(3)|_{\cS^0_k(22)})&=-\z\frac23(-3)^\vk(1+2B_k)-2a_{1,3,k}+4\delta_{k,2},\\
\tr(\iT(3)|_{\cS^0_k(23)})&=\z\frac23(-3)^\vk(2+B_k)+2a_{2,3,k}-4\delta_{k,2},\\
\tr(\iT(3)|_{\cS^0_k(24)})&=-(-3)^\vk,\\
\tr(\iT(3)|_{\cS^0_k(25)})&=-\z\frac13(-3)^\vk(2+B_k)+a_{1,3,k}-a_{2,3,k}+1,\end{align*}\begin{align*}
\tr(\iT(3)|_{\cS^0_k(26)})&=-4a_{1,3,k}-2a_{2,3,k}+4\delta_{k,2},\\
\tr(\iT(3)|_{\cS^0_k(28)})&=2a_{1,3,k}-2a_{2,3,k},\\
\tr(\iT(3)|_{\cS^0_k(29)})&=\z\frac23(-3)^\vk(2+B_k)+2a_{1,3,k}+2a_{2,3,k}-4\delta_{k,2},\\
\tr(\iT(3)|_{\cS^0_k(30)})&=\z\frac23(-3)^\vk(1+2B_k)-\delta_{k,2},\\
\tr(\iT(3)|_{\cS^0_k(31)})&=2a_{2,3,k}-4\delta_{k,2},\\
\tr(\iT(3)|_{\cS^0_k(32)})&=0,\\
\tr(\iT(3)|_{\cS^0_k(33)})&=-\z\frac23(-3)^\vk(2+B_k)+\delta_{k,2},\\
\tr(\iT(3)|_{\cS^0_k(34)})&=-\z\frac23(-3)^\vk(1+2B_k)-4a_{1,3,k}+4\delta_{k,2},\\
\tr(\iT(3)|_{\cS^0_k(35)})&=-4a_{2,3,k}+4\delta_{k,2},\\
\tr(\iT(3)|_{\cS^0_k(39)})&=\delta_{k,2},\\
\tr(\iT(3)|_{\cS^0_k(42)})&=-\delta_{k,2}.\end{align*}
\end{ex}

\begin{Lem}\label{Ts}
If $l>1$ is square-free and $(l,t)>1$, then we have $t\notin\T_\square(l)$.
\end{Lem}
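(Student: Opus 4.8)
The plan is to prove the contrapositive: assuming $t\in\T_\square(l)$, I will deduce $(l,t)=1$. By definition of $\T_\square(l)$ there is a nonnegative integer $s$ with $t^2-4l=s^2$, so that $4l=t^2-s^2=(t+s)(t-s)$. First I would dispose of parity: the two factors $t+s$ and $t-s$ differ by the even number $2s$ and hence have the same parity, while their product $4l$ is even; therefore both are even. Writing $t+s=2b$ and $t-s=2a$ with $a,b\in\Z$, I obtain the clean factorization $ab=l$ together with $a+b=t$.

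With this in hand the square-freeness of $l$ does the real work. Since $ab=l$ and $l$ is square-free, no prime can divide both $a$ and $b$, so $(a,b)=1$. The key step is then to take any prime $p\mid l$: as $p\mid ab$ and $(a,b)=1$, the prime $p$ divides exactly one of $a,b$, and hence $p\nmid a+b=t$. Thus no prime divides both $l$ and $t$, i.e. $(l,t)=1$, which contradicts the hypothesis $(l,t)>1$ and forces $t\notin\T_\square(l)$.

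I expect no serious obstacle here, as the argument is entirely elementary. The one point that needs care is the parity step, which is exactly what guarantees that $a$ and $b$ are genuine integers rather than half-integers; without it the factorization $l=ab$ would be unavailable. One could instead argue locally at a prime $p\mid(l,t)$ by comparing the $p$-adic valuations of $t^2$ and $4l$ (noting $v_p(l)=1$ by square-freeness), but for $p=2$ this splits into subcases and forces a separate analysis modulo $8$, so the factorization route above is both shorter and uniform.
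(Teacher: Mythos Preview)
Your proof is correct but takes a different route from the paper's. The paper argues locally, precisely along the lines you sketch at the end: if $(l,t)>2$ it picks an odd prime $p\mid(l,t)$ and notes $v_p(t^2-4l)=1$; if $(l,t)=2$ it observes $l\equiv 2\bmod 4$ and checks that $(t/2)^2-l\equiv -2$ or $-1\bmod 4$, neither a square modulo $4$ (so a mod~$4$ analysis suffices rather than mod~$8$). Your global factorization $l=ab$, $t=a+b$ --- in which $a,b$ are precisely the two integer roots of $X^2-tX+l$ --- avoids the case split at $p=2$ entirely and is arguably the cleaner argument here; the paper's valuation approach, by contrast, is more in keeping with the prime-by-prime computations used throughout \S5 and adapts more directly when one only wants information at a single prime without assuming the discriminant is globally a square.
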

\begin{proof}
If $(l,t)>2$, then there exists $p\in\P((l,t))\diagdown\{2\}$ and $t^2-4l$ is not square since $v_p(t^2-4l)=1$. If $(l,t)=2$, then we see $l\equiv2 \bmod 4$ and $t^2-4l$ is not square since $(\frac t2)^2-l\equiv-2$ or ${-1}\bmod4$.
\end{proof}

\begin{Prop}
If $l>1$ is square-free, $(l,N/\!\!/l)=1$ and $(l,N)>2\sqrt l$, then we have
\[\tr(\iT(l)|_{\cS^0_k(N)})=-\z\frac{h(-l)}2(-l)^\vk\Lambda_{0,l}(N)+\delta_{k,2}\mu(N)\VP{p\in\P(l/\!\!/N)}(1+p).\]
\end{Prop}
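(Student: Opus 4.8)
The plan is to start from the general formula of Theorem \ref{1},
\[
\tr(\iT(l)|_{\cS^0_k(N)})=-\VS{t\in\T(l)}a_{t,l,k}h_{t,l}\Lambda_{t,l}(N)+\delta_{k,2}\mu(N)\VP{p\in\P(l/\!\!/N)}(1+p),
\]
and to show that, under the hypothesis $(l,N)>2\sqrt l$, every term in the sum over $\T(l)$ vanishes except the one coming from $t=0$. The key mechanism is Proposition \ref{Lambda0}: whenever $(l,N)\nmid t$ we have $\Lambda_{t,l}(N)=0$. So the whole argument reduces to checking which $t\in\T(l)$ can satisfy $(l,N)\mid t$ once $(l,N)$ is forced to be large.

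\emph{Main step.} Write $g=(l,N)$. If $g\mid t$ and $t\ne0$, then $|t|\ge g>2\sqrt l$, hence $t^2>4l$, so $t^2-4l>0$; this means $t\notin\T_-(l)$. It remains to rule out $t\in\T_\square(l)$ with $g\mid t$ and $t\ne0$. Here I would invoke Lemma \ref{Ts}: since $g=(l,N)>2\sqrt l\ge2>1$ and $g\mid(l,t)$, we have $(l,t)>1$, and Lemma \ref{Ts} gives $t\notin\T_\square(l)$. Thus the only surviving value with $g\mid t$ is $t=0$, and for all $t\ne0$ we have $(l,N)\nmid t$ (when $g\nmid t$ this is immediate; when $g\mid t$, $t\ne0$, the two observations just made exclude $t$ from $\T(l)$ altogether). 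By Proposition \ref{Lambda0}, $\Lambda_{t,l}(N)=0$ for each such $t$.

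\emph{The $t=0$ term.} What is left is to evaluate the single surviving summand. Recall $\T(l)=\T_-(l)\cup\T_\square(l)$ and $0\in\T_-(l)$ since $0^2-4l=-4l<0$ (using $l>1$, so certainly $l>0$). From the remark preceding Lemma \ref{as}, $a_{0,l,k}=(-l)^\vk$. For $t=0$ one has $t^2-4l=-4l$, and $h_{0,l}=\tfrac{h(-4l)}{\#\text{units}}$; since $l>1$ is square-free the field $\Q(\sqrt{-4l})=\Q(\sqrt{-l})$ has only $\pm1$ as units (the cases $l=1,3$ giving extra units are excluded here), so $h_{0,l}=\tfrac12 h(-l)$ with the convention that $h(-l)$ denotes the class number of the relevant discriminant. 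Substituting $a_{0,l,k}h_{0,l}=(-l)^\vk\cdot\tfrac{h(-l)}2$ yields exactly
\[
-\z\frac{h(-l)}2(-l)^\vk\Lambda_{0,l}(N)+\delta_{k,2}\mu(N)\VP{p\in\P(l/\!\!/N)}(1+p),
\]
which is the claimed formula.

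\emph{Anticipated obstacle.} The only delicate point is the bookkeeping in the $t=0$ term: matching the author's $h_{t,l}$ (class number divided by number of units) and the value $a_{0,l,k}=(-l)^\vk$ against the coefficient $\tfrac{h(-l)}2(-l)^\vk$ stated in the conclusion, i.e.\ confirming that the unit count contributes exactly the factor $\tfrac12$ and that $h(-l)$ is being read with the discriminant convention fixed in \S4. The vanishing of all other terms is essentially immediate once Proposition \ref{Lambda0}, Lemma \ref{Ts}, and the inequality $t^2-4l>0$ for $0\ne t$ with $g\mid t$ are combined; no separate computation of any $\Lambda_{t,l}$ is needed beyond $t=0$.
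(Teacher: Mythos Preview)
Your proof is correct and follows essentially the same route as the paper: Theorem \ref{1} for the general formula, Proposition \ref{Lambda0} to kill all $t$ with $(l,N)\nmid t$, and Lemma \ref{Ts} (together with $|t|\ge(l,N)>2\sqrt l$) to see that no nonzero $t$ with $(l,N)\mid t$ lies in $\T(l)$, leaving only the $t=0$ term. The one point you leave implicit is the exclusion of $l=3$: the paper makes this explicit by observing $l\ge(l,N)>2\sqrt l$ forces $l\ge5$, which is what guarantees the unit group of $\Q(\sqrt{-l})$ is $\{\pm1\}$ and hence $h_{0,l}=\tfrac{h(-l)}{2}$.
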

\begin{proof}
By Theorem \ref{1}, Proposition \ref{Lambda0} and Lemma \ref{Ts}, we see
\[\tr(\iT(l)|_{\cS^0_k(N)})=-h_{0,l}(-l)^\vk\Lambda_{0,l}(N)+\delta_{k,2}\mu(N)\VP{p\in\P(l/\!\!/N)}(1+p).\]
We note $l\ge5$ since $l\ge(l,N)>2\sqrt l$, thus $h_{0,l}=\frac{h(-l)}2$.
\end{proof}

\begin{ex} For each $l,N\le42$ satisfying the conditions of the above Proposition, we write down the following formulas:
\begin{align*}\tr(\iT(5)|_{\cS^0_k(5)})&=(-5)^\vk-\delta_{k,2},\\
\tr(\iT(5)|_{\cS^0_k(10)})&=-(-5)^\vk+\delta_{k,2},\\
\tr(\iT(5)|_{\cS^0_k(15)})&=\delta_{k,2},\\
\tr(\iT(5)|_{\cS^0_k(20)})&=-(-5)^\vk,\\
\tr(\iT(5)|_{\cS^0_k(30)})&=-\delta_{k,2},\\
\tr(\iT(5)|_{\cS^0_k(35)})&=\delta_{k,2},\\
\tr(\iT(5)|_{\cS^0_k(40)})&=(-5)^\vk,\end{align*}
\begin{align*}\tr(\iT(6)|_{\cS^0_k(6)})&=-(-6)^\vk+\delta_{k,2},\\
\tr(\iT(6)|_{\cS^0_k(30)})&=-\delta_{k,2},\\
\tr(\iT(6)|_{\cS^0_k(42)})&=-\delta_{k,2},\end{align*}
\begin{align*}\tr(\iT(7)|_{\cS^0_k(7)})&=(-7)^\vk-\delta_{k,2},\\
\tr(\iT(7)|_{\cS^0_k(14)})&=\delta_{k,2},\\
\tr(\iT(7)|_{\cS^0_k(21)})&=-2(-7)^\vk+\delta_{k,2},\\
\tr(\iT(7)|_{\cS^0_k(28)})&=0,\\
\tr(\iT(7)|_{\cS^0_k(35)})&=-2(-7)^\vk+\delta_{k,2},\\
\tr(\iT(7)|_{\cS^0_k(42)})&=-\delta_{k,2},\end{align*}
\begin{align*}\tr(\iT(10)|_{\cS^0_k(10)})&=-(-10)^\vk+\delta_{k,2},\\
\tr(\iT(10)|_{\cS^0_k(30)})&=2(-10)^\vk-\delta_{k,2},\end{align*}
\begin{align*}\tr(\iT(11)|_{\cS^0_k(11)})&=2(-11)^\vk-\delta_{k,2},\\
\tr(\iT(11)|_{\cS^0_k(22)})&=-(-11)^\vk+\delta_{k,2},\\
\tr(\iT(11)|_{\cS^0_k(33)})&=\delta_{k,2},\end{align*}
\begin{align*}\tr(\iT(13)|_{\cS^0_k(13)})&=(-13)^\vk-\delta_{k,2},\\
\tr(\iT(13)|_{\cS^0_k(26)})&=-(-13)^\vk+\delta_{k,2},\\
\tr(\iT(13)|_{\cS^0_k(39)})&=-2(-13)^\vk+\delta_{k,2},\end{align*}
\begin{align*}\tr(\iT(14)|_{\cS^0_k(14)})&=-2(-14)^\vk+\delta_{k,2},\\
\tr(\iT(14)|_{\cS^0_k(42)})&=-\delta_{k,2},\end{align*}
\begin{align*}\tr(\iT(15)|_{\cS^0_k(15)})&=-2(-15)^\vk+\delta_{k,2},\\
\tr(\iT(15)|_{\cS^0_k(30)})&=-\delta_{k,2},\end{align*}
\begin{align*}\tr(\iT(17)|_{\cS^0_k(17)})&=2(-17)^\vk-\delta_{k,2},,\\
\tr(\iT(17)|_{\cS^0_k(34)})&=-2(-17)^\vk+\delta_{k,2},\end{align*}
\begin{align*}\tr(\iT(19)|_{\cS^0_k(19)})&=2(-19)^\vk-\delta_{k,2},\\
\tr(\iT(19)|_{\cS^0_k(38)})&=-(-19)^\vk+\delta_{k,2},\end{align*}
\begin{align*}\tr(\iT(21)|_{\cS^0_k(21)})&=-2(-21)^\vk+\delta_{k,2},\\
\tr(\iT(21)|_{\cS^0_k(42)})&=2(-21)^\vk-\delta_{k,2},\end{align*}
\begin{align*}\tr(\iT(22)|_{\cS^0_k(11)})&=(-22)^\vk-3\delta_{k,2},\\
\tr(\iT(22)|_{\cS^0_k(22)})&=-(-22)^\vk+\delta_{k,2},\\
\tr(\iT(22)|_{\cS^0_k(33)})&=-2(-22)^\vk+3\delta_{k,2},\end{align*}
\[\tr(\iT(23)|_{\cS^0_k(23)})=3(-23)^\vk-\delta_{k,2},\]
\begin{align*}\tr(\iT(26)|_{\cS^0_k(13)})&=3(-26)^\vk-3\delta_{k,2},\\
\tr(\iT(26)|_{\cS^0_k(26)})&=-3(-26)^\vk+\delta_{k,2},\\
\tr(\iT(26)|_{\cS^0_k(39)})&=-\delta_{k,2},\end{align*}
\[\tr(\iT(29)|_{\cS^0_k(29)})=3(-29)^\vk-\delta_{k,2},\]
\begin{align*}\tr(\iT(30)|_{\cS^0_k(15)})&=-2(-30)^\vk+3\delta_{k,2},\\
\tr(\iT(30)|_{\cS^0_k(30)})&=2(-30)^\vk-\delta_{k,2},\end{align*}
\[\tr(\iT(31)|_{\cS^0_k(31)})=3(-31)^\vk-\delta_{k,2},\]
\[\tr(\iT(33)|_{\cS^0_k(33)})=-2(-33)^\vk+\delta_{k,2}.\]
\begin{align*}\tr(\iT(34)|_{\cS^0_k(17)})&=2(-34)^\vk-3\delta_{k,2},\\
\tr(\iT(34)|_{\cS^0_k(34)})&=-2(-34)^\vk+\delta_{k,2},\end{align*}
\[\tr(\iT(35)|_{\cS^0_k(35)})=-4(-35)^\vk+\delta_{k,2},\]
\[\tr(\iT(37)|_{\cS^0_k(37)})=(-37)^\vk-\delta_{k,2}.\]
\begin{align*}\tr(\iT(38)|_{\cS^0_k(19)})=3(-38)^\vk-3\delta_{k,2},\\
\tr(\iT(38)|_{\cS^0_k(38)})=-3(-38)^\vk+\delta_{k,2},\end{align*}
\begin{align*}\tr(\iT(39)|_{\cS^0_k(13)})&=4(-39)^\vk-4\delta_{k,2},\\
\tr(\iT(39)|_{\cS^0_k(26)})&=4\delta_{k,2},\\
\tr(\iT(39)|_{\cS^0_k(39)})&=-4(-39)^\vk+\delta_{k,2}.\end{align*}
\[\tr(\iT(41)|_{\cS^0_k(41)})=4(-41)^\vk-\delta_{k,2},\]
\begin{align*}\tr(\iT(42)|_{\cS^0_k(14)})&=-2(-42)^\vk+4\delta_{k,2},\\
\tr(\iT(42)|_{\cS^0_k(21)})&=-2(-42)^\vk+3\delta_{k,2},\\
\tr(\iT(42)|_{\cS^0_k(42)})&=2(-42)^\vk-\delta_{k,2},\end{align*}
\end{ex}

\section{proof of theorem \ref{2}}

We put $\N(N^\times)$ the set of all divisors of $N^\times$. For $h,i\in\N(N^\times)$, we define
\[h\tu i=\VP{p\in\P(h/\!\!/i)\cup\P(i/\!\!/h)}p.\]Then $\N(N^\times)$ becomes a group isomorphic to $(\Z/2\Z)^{\sigma(N^\times)}$ and $\ang{\bullet,i}$ becomes a homomorphism $\N(N^\times)\to\{\pm1\}$, i.e.
\[\ang{h,i}\ang{h',i}=\ang{h\tu h',i}.\]
Let $(l,N^\times)=1$. The orthogonal relation of characters induces
\begin{align*}\sigma(N^\times)\tr(\iT(l)|_{\cS_k^0(N;i)})&=\VS{j|N^\times}\Big(\VS{h|N^\times}\ang{h,i\tu j}\Big)\tr(\iT(l)|_{\cS_k^0(N;j)})\\
&=\VS{h|N^\times}\ang{h,i}\VS{j|N^\times}\ang{h,j}\tr(\iT(l)|_{\cS_k^0(N;j)}).\end{align*}
Here, we see
\begin{align*}\ang{h,j}\tr(\iT(l)|_{\cS_k^0(N;j)})&=\VS{f\in\cP_k(N;j)}\ia_l(f)\ang{h,j}\\
&=\VS{f\in\cP_k(N;j)}\ia_l(f)\VP{p\in\P(h)}\frac{\ia_p(f)}{p^\vk}
\\&=h^{-\vk}\tr(\iT(hl)|_{\cS_k^0(N;j)}),\end{align*}
and thus
\[\sigma(N^\times)\tr(\iT(l)|_{\cS_k^0(N;i)})=\VS{h|N^\times}\ang{h,i}h^{-\vk}\tr(\iT(hl)|_{\cS_k^0(N)}).\]

\section{Examples of dimension of $\cS^0_k(N;i)$}

For example, if $p$ is prime and $(p,l)=1$, then we see
\begin{align*}\dim\cS_k^0(p;1)&=\z\frac12\big(\dim\cS_k^0(p)+\frac1{p^\vk}\tr(\iT(p)|_{\cS_k^0(p)})\big),\\
\dim\cS_k^0(p;p)&=\z\frac12\big(\dim\cS_k^0(p)-\frac1{p^\vk}\tr(\iT(p)|_{\cS_k^0(p)})\big).\end{align*}
Put \[d_k(N;i)=\dim\cS^0_k(N;i)-\mu(N)\delta_{k,2}\delta_{i,1}.\]
We calculate $d_k(N;i)$ for each $N\le42$.
\begin{ex}\label{ex}
For prime $N$, we write down the following formulas:
\[\begin{array}{|c|cc|}\hline
k&d_k(2;1)&d_k(2;2)\\\hline
2+24n&1+n&n\\
4+24n&n&n\\
6+24n&n&n\\
8+24n&n&1+n\\
10+24n&1+n&n\\
12+24n&n&n\\\hline
14+24n&1+n&1+n\\
16+24n&n&1+n\\
18+24n&1+n&n\\
20+24n&1+n&1+n\\
22+24n&1+n&1+n\\
24+24n&n&1+n\\\hline
\end{array}\]
\[\begin{array}{|c|cc|cc|cc|}\hline
k&d_k(3;1)&d_k(3;3)&d_k(5;1)&d_k(5;5)&d_k(11;1)&d_k(11;11)\\\hline
2+12n&1+n&n&1+2n&2n&2+5n&5n\\
4+12n&n&n&2n&1+2n&5n&2+5n\\
6+12n&1+n&n&1+2n&2n&3+5n&1+5n\\\hline
8+12n&n&1+n&1+2n&2+2n&2+5n&4+5n\\
10+12n&1+n&1+n&2+2n&1+2n&5+5n&3+5n\\
12+12n&n&1+n&1+2n&2+2n&3+5n&5+5n\\\hline
\end{array}\]
\[\begin{array}{|c|cc|cc|}\hline
k&d_k(17;1)&d_k(17;17)&d_k(23;1)&d_k(23;23)\\\hline
2+12n&2+8n&8n&3+11n&11n\\
4+12n&1+8n&3+8n&1+11n&4+11n\\
6+12n&4+8n&2+8n&6+11n&3+11n\\\hline
8+12n&4+8n&6+8n&5+11n&8+11n\\
10+12n&7+8n&5+8n&10+11n&7+11n\\
12+12n&6+8n&8+8n&8+11n&11+11n\\\hline
\end{array}\]
\[\begin{array}{|c|cc|cc|}\hline
k&d_k(29;1)&d_k(29;29)&d_k(41;1)&d_k(41;41)\\\hline
2+12n&3+14n&14n&4+20n&20n\\
4+12n&2+14n&5+14n&3+20n&7+20n\\
6+12n&7+14n&4+14n&10+20n&6+20n\\\hline
8+12n&7+14n&10+14n&10+20n&14+20n\\
10+12n&12+14n&9+14n&17+20n&13+20n\\
12+12n&11+14n&14+14n&16+20n&20+20n\\\hline
\end{array}\]
\[\begin{array}{|c|cc|cc|cc|}\hline
k&d_k(7;1)&d_k(7;7)&d_k(13;1)&d_k(13;13)&d_k(19;1)&d_k(19;19)\\\hline
2+4n&1+n&n&1+2n&2n&2+3n&3n\\\hline
4+4n&n&1+n&1+2n&2+2n&1+3n&3+3n\\\hline
\end{array}\]
\[\begin{array}{|c|cc|cc|}\hline
k&d_k(31;1)&d_k(31;31)&d_k(37;1)&d_k(37;37)\\\hline
2+4n&3+5n&5n&2+6n&1+6n\\\hline
4+4n&2+5n&5+5n&4+6n&5+6n\\\hline
\end{array}\]
\end{ex}

\begin{ex}For square-free composite $N$, we write down the following formulas:
\[\begin{array}{|c|cccc|}\hline
k&d_k(6;1)&d_k(6;2)&d_k(6;3)&d_k(6;6)\\\hline
2+24n&-1+n&n&n&n\\
4+24n&n&n&n&1+n\\
6+24n&n&n&1+n&n\\
8+24n&1+n&n&n&n\\
10+24n&n&1+n&n&n\\
12+24n&1+n&1+n&n&1+n\\\hline
14+24n&n&n&1+n&n\\
16+24n&1+n&n&1+n&1+n\\
18+24n&n&1+n&1+n&1+n\\
20+24n&1+n&1+n&n&1+n\\
22+24n&1+n&1+n&1+n&n\\
24+24n&2+n&1+n&1+n&1+n\\\hline
\end{array}\]
\[\begin{array}{|c|cc|}\hline
k&d_k(10;1)&d_k(10;2\text{ or }3\text{ or }5)\\\hline
2+12n&-1+n&n\\
4+12n&1+n&n\\
6+12n&n&1+n\\\hline
8+12n&1+n&n\\
10+12n&n&1+n\\
12+12n&2+n&1+n\\\hline
\end{array}\]
\[\begin{array}{|c|cccc|}\hline
k&d_k(14;1)&d_k(14;2)&d_k(14;7)&d_k(14;14)\\\hline
2+8n&-1+n&1+n&n&n\\
4+8n&1+n&n&n&1+n\\\hline
6+8n&n&1+n&1+n&n\\
8+8n&2+n&n&1+n&1+n\\\hline
\end{array}\]
\[\begin{array}{|c|cccc|}\hline
k&d_k(15;1)&d_k(15;3)&d_k(15;5)&d_k(15;15)\\\hline
2+12n&-1+2n&1+2n&2n&2n\\
4+12n&1+2n&2n&2n&1+2n\\
6+12n&2n&2+2n&1+2n&1+2n\\\hline
8+12n&2+2n&2n&1+2n&1+2n\\
10+12n&1+2n&2+2n&2+2n&1+2n\\
12+12n&3+2n&1+2n&2+2n&2+2n\\\hline
\end{array}\]
\[\begin{array}{|c|ccc|}\hline
k&d_k(21;1)&d_k(21;3\text{ or }21)&d_k(21;7)\\\hline
2+4n&-1+n&n&1+n\\\hline
4+4n&2+n&1+n&n\\\hline
\end{array}\]
\[\begin{array}{|c|ccc|}\hline
k&d_k(22;1)&d_k(22;2\text{ or }22)&d_k(22;11)\\\hline
2+24n&-1+5n&5n&5n\\
4+24n&1+5n&1+5n&5n\\
6+24n&1+5n&1+5n&2+5n\\
8+24n&2+5n&1+5n&1+5n\\
10+24n&1+5n&2+5n&2+5n\\
12+24n&3+5n&3+5n&2+5n\\\hline
14+24n&2+5n&2+5n&3+5n\\
16+24n&4+5n&3+5n&3+5n\\
18+24n&3+5n&4+5n&4+5n\\
20+24n&4+5n&4+5n&3+5n\\
22+24n&4+5n&4+5n&5+5n\\
24+24n&6+5n&5+5n&5+5n\\\hline
\end{array}\]
\[\begin{array}{|c|ccc|}\hline
k&d_k(26;1)&d_k(26;2\text{ or }13)&d_k(26;26)\\\hline
2+4n&-1+n&1+n&n\\\hline
4+4n&2+n&n&1+n\\\hline
\end{array}\]
\[\begin{array}{|c|cccc|}\hline
k&d_k(33;1)&d_k(33;3)&d_k(33;11)&d_k(33;33)\\\hline
2+12n&-1+5n&1+5n&5n&5n\\
4+12n&2+5n&1+5n&1+5n&2+5n\\
6+12n&1+5n&3+5n&2+5n&2+5n\\\hline
8+12n&4+5n&2+5n&3+5n&3+5n\\
10+12n&3+5n&4+5n&4+5n&3+5n\\
12+12n&6+5n&4+5n&5+5n&5+5n\\\hline
\end{array}\]
\[\begin{array}{|c|ccc|}\hline
k&d_k(34;1)&d_k(34;2\text{ or }34)&d_k(34;17)\\\hline
2+12n&-1+4n&4n&1+4n\\
4+12n&2+4n&1+4n&4n\\
6+12n&1+4n&2+4n&3+4n\\\hline
8+12n&3+4n&2+4n&1+4n\\
10+12n&2+4n&3+4n&4+4n\\
12+12n&5+4n&4+4n&3+4n\\\hline
\end{array}\]
\[\begin{array}{|c|cccc|}\hline
k&d_k(35;1)&d_k(35;5)&d_k(35;7)&d_k(35;35)\\\hline
2+4n&-1+2n&1+2n&2+2n&2n\\\hline
4+4n&3+2n&1+2n&2n&2+2n\\\hline
\end{array}\]
\[\begin{array}{|c|cccc|}\hline
k&d_k(38;1)&d_k(38;2)&d_k(38;19)&d_k(38;38)\\\hline
2+8n&-1+3n&1+3n&1+3n&3n\\
4+8n&2+3n&1+3n&3n&2+3n\\\hline
6+8n&1+3n&2+3n&3+3n&1+3n\\
8+8n&4+3n&2+3n&2+3n&3+3n\\\hline
\end{array}\]
\[\begin{array}{|c|cccc|}\hline
k&d_k(39;1)&d_k(39;3)&d_k(39;13)&d_k(39;39)\\\hline
2+4n&-1+2n&1+2n&2+2n&2n\\\hline
4+4n&3+2n&1+2n&2n&2+2n\\\hline
\end{array}\]
\[\begin{array}{|c|ccc|}\hline
k&d_k(30;1\text{ or }10)&d_k(30;3\text{ or }6\text{ or }15\text{ or }30)&d_k(30;2\text{ or }5)\\\hline
2+12n&1+n&n&n\\
4+12n&n&n&1+n\\
6+12n&1+n&n&n\\\hline
8+12n&n&1+n&1+n\\
10+12n&1+n&1+n&n\\
12+12n&n&1+n&1+n\\\hline
\end{array}\]
\[\begin{array}{|c|ccc|}\hline
k&d_k(42;1\text{ or }21)&d_k(42;2\text{ or }6\text{ or }14\text{ or }42)&d_k(42;3\text{ or }7)\\\hline
2+8n&1+n&n&n\\
4+8n&n&n&1+n\\\hline
6+8n&1+n&1+n&n\\
8+8n&n&1+n&1+n\\\hline
\end{array}\]
\end{ex}
\smallskip
\begin{ex}
For not square-free $N$, we write down the following formulas:
\[\begin{array}{|c|cc|cc|}\hline
k&d_k(12;1)&d_k(12;3)&d_k(20;1)&d_k(20;5)\\\hline
2+12n&n&n&2n&1+2n\\
4+12n&1+n&n&1+2n&2n\\
6+12n&n&n&2n&1+2n\\\hline
8+12n&1+n&1+n&2+2n&1+2n\\
10+12n&n&1+n&1+2n&2+2n\\
12+12n&1+n&1+n&2+2n&1+2n\\\hline
\end{array}\]
\[\begin{array}{|c|cc|}\hline
k&d_k(18;1)&d_k(18;2)\\\hline
2+24n&5n&5n\\
4+24n&1+5n&5n\\
6+24n&1+5n&2+5n\\
8+24n&1+5n&1+5n\\
10+24n&2+5n&2+5n\\
12+24n&3+5n&2+5n\\\hline
14+24n&2+5n&3+5n\\
16+24n&3+5n&3+5n\\
18+24n&4+5n&4+5n\\
20+24n&4+5n&3+5n\\
22+24n&4+5n&5+5n\\
24+24n&5+5n&5+5n\\\hline
\end{array}\]
\[\begin{array}{|c|cc|c|cc|}\hline
k&d_k(24;1)&d_k(24;3)&d_k(28;1\text{ or }7)&d_k(40;1)&d_k(40;5)\\\hline
2+4n&n&1+n&n&1+2n&2n\\\hline
4+4n&1+n&n&1+n&1+2n&2+2n\\\hline
\end{array}\]
\end{ex}

\section{Some primitive forms for $N=14$}

\subsection{Modular forms}

We recall some facts on modular forms, for the next subsection. For a congruence subgroup $\Gamma$ of ${\rm SL}_2(\Z)$, we denote by $\cM_k(\Gamma)$ the space of all modular forms of wight $k$ with respect to $\Gamma$. We put $\cM_k(N)=\cM_k(\Gamma_0(N))$. Moreover, we define
\[\Gamma_1(N)=\{(\BM a&b\\c&d\EM)\in\Gamma_0(N) \,|\, a\equiv1\bmod N\},\]
and for each Dirichlet character $\chi:(\Z/N\Z)^\times\to\C^\times$
\[\cM_k(N,\chi)=\big\{f\in\cM_k(\Gamma_1(N)) \,\big|\, (cz+d)^{-k}f(\z\frac{az+b}{cz+d})=\chi(d)f\text{ for all }(\BM a&b\\c&d\EM)\in\Gamma_0(N)\big\}.\]
Put
\begin{align*}\iC_2&=1+24\VS{n=1}^\infty\Big(\VS{d|n}\bm1_2(d)d\Big)q^n\in\cM_2(2),\\
\iF_7&=1+2\VS{n=1}^\infty\Big(\VS{d|n}\rho_7(d)\Big)q^n\in\cM_1(7,\rho_7),\end{align*}
where $\bm1_N:(\Z/N\Z)^\times\to\{1\}$ and $\bm1_7\neq\rho_7:(\Z/7\Z)^\times\to\{1,-1\}$. See \cite[\S4]{DS} or \cite[\S5.3]{Ste} for details.

We regard $\cM_k(\Gamma_1(N))\subset\C[[q]]$ via the Fourier expansion.
\begin{Lem}\label{deg}We see
\[\cM_2(14)\cap\C[[q]]q^4=\{0\}.\]\end{Lem}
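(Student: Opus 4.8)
The plan is to reduce the statement to a finite linear-algebra check. Writing $a_n(f)$ for the coefficient of $q^n$, the assertion $\cM_2(14)\cap\C[[q]]q^4=\{0\}$ says exactly that the evaluation map $\Phi:\cM_2(14)\to\C^4$, $f\mapsto\big(a_0(f),a_1(f),a_2(f),a_3(f)\big)$, has trivial kernel. First I would compute $\dim\cM_2(14)$. By the Atkin--Lehner decomposition and the dimension formulas of \S2, $\dim\cS_2(14)=(\textit1*\textit1*\dim\cS_2^0)(14)=1$, since among $M\mid14$ only $M=14$ contributes a newform; meanwhile the weight-$2$ Eisenstein subspace has dimension $\nu_\infty(\Gamma_0(14))-1=4-1=3$. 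Hence $\dim\cM_2(14)=4$. As domain and target of $\Phi$ both have dimension $4$, it suffices to exhibit a basis of $\cM_2(14)$ whose images under $\Phi$ are linearly independent in $\C^4$.

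Next I would write down such a basis from the forms introduced just above the lemma. Since $\Gamma_0(14)\subset\Gamma_0(2),\Gamma_0(7)$ and $\rho_7^2=\bm1_7$, we have $\iC_2\in\cM_2(2)\subset\cM_2(14)$ and $\iF_7^{\,2}\in\cM_2(7)\subset\cM_2(14)$, and the dilation $g(z)\mapsto g(2z)$ maps $\cM_2(7)$ into $\cM_2(14)$, so $\iF_7^{\,2}(2z)\in\cM_2(14)$. Together with the unique normalized newform $f_{14}$ spanning the one-dimensional space $\cS_2(14)=\cS_2^0(14)$ this gives four elements of $\cM_2(14)$. Their initial coefficients are read off directly from the definitions of $\iC_2,\iF_7$: one finds $\Phi(\iC_2)=(1,24,24,96)$, $\Phi(\iF_7^{\,2})=(1,4,12,16)$, and $\Phi(\iF_7^{\,2}(2z))=(1,0,4,0)$, while $\Phi(f_{14})=(0,1,-1,-2)$. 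Here $a_1(f_{14})=1$ by normalization and, because $\cS_2^0(14)$ is one-dimensional, $a_2(f_{14})=\tr(\iT(2)|_{\cS_2^0(14)})=-1$ and $a_3(f_{14})=\tr(\iT(3)|_{\cS_2^0(14)})=-2$ are precisely the $k=2$ specializations of the formulas tabulated in \S6, so no input outside this paper is needed.

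Finally I would verify that these four vectors are independent. The determinant
\[\det\begin{pmatrix}0&1&-1&-2\\1&24&24&96\\1&4&12&16\\1&0&4&0\end{pmatrix}=672\neq0\]
simultaneously shows that $f_{14},\iC_2,\iF_7^{\,2},\iF_7^{\,2}(2z)$ are linearly independent---hence a basis of the four-dimensional space $\cM_2(14)$---and that $\Phi$ sends this basis to a basis of $\C^4$. Therefore $\Phi$ is an isomorphism, so $\ker\Phi=0$, which is exactly the stated assertion.

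The only genuine obstacle is bookkeeping. One must pin down $\dim\cM_2(14)=4$ correctly, and one must record the first four $q$-coefficients of the four chosen forms without error; the whole argument then rests on the single nonvanishing determinant above. I would emphasize that the naive Sturm bound is insufficient here: $\big\lfloor\tfrac{2}{12}[{\rm SL}_2(\Z):\Gamma_0(14)]\big\rfloor=\big\lfloor\tfrac{2\cdot24}{12}\big\rfloor=4$ would force the vanishing of $a_0,\dots,a_4$, one coefficient more than the hypothesis $f\in\C[[q]]q^4$ provides, which is exactly why the explicit basis is required rather than an appeal to Sturm.
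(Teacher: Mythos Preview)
Your argument is correct. You reduce the lemma to a rank computation on a $4$-dimensional space, build four explicit elements of $\cM_2(14)$, and verify by a nonvanishing $4\times4$ determinant that their first four Fourier coefficients span $\C^4$. The coefficient computations and the determinant $672$ are right, and there is no circularity in invoking the trace values for $a_2(f_{14})$ and $a_3(f_{14})$ from \S6, since those formulas do not depend on the present lemma.

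The paper follows the same overall strategy---$\dim\cM_2(14)=4$, then exhibit a basis whose initial coefficients are independent---but with a different, tidier choice of basis. It sets $\alpha=\tfrac12(\iF_7-\iF_7^{(2)})$ and $\gamma=\tfrac18(\iF_7^2-2\iF_7\iF_7^{(2)}+\iC_2^{(7)})$ and takes $\{\iF_7^2,\,\iF_7\alpha,\,\alpha^2,\,\gamma\}$. These have $q$-orders exactly $0,1,2,3$, so the matrix of initial coefficients is \emph{triangular} and the conclusion is immediate without any determinant. The paper's basis is built entirely out of Eisenstein-type data, whereas you bring in the cusp form $f_{14}$ and feed in the trace formulas of \S6; both are legitimate, but the echelon basis makes the ``we easily get the assertion'' step genuinely effortless. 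Your closing remark that the Sturm bound alone would require one more vanishing coefficient than the hypothesis supplies is exactly the observation the paper records immediately after the proof.
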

\begin{proof}Put
\begin{align*}\alpha&=\z\frac12(\iF_7-\iF_7^{(2)})\in\cM_1(14,\rho_7)\cap\C[[q]]q,\\
\gamma&=\z\frac18(\iF_7^2-2\iF_7\iF_7^{(2)}+\iC_2^{(7)})\in\cM_2(14)\cap\C[[q]]q^3,\end{align*}
where $f^{(h)}(q)=f(q^h)$. Since $\dim\cM_2(14)=4$ (cf. \cite[Theorem 3.5.1]{DS}), we see
\[\cM_2(14)=\C\iF_7^2\oplus\C\iF_7\alpha\oplus\C\alpha^2\oplus\C\gamma,\]
and we easily get the assertion.
\end{proof}
Note that a weaker result
\[\cM_2(14)\cap\C[[q]]q^5=\{0\}\]
can be obtained directly from a result of Sturm \cite{Stu} and $[{\rm SL}_2(\Z):\Gamma_0(14)]=24$.

\smallskip

\subsection{Primitive forms}

We represent all primitive forms of weight 2,4, in terms of $\iC_2$ and $\iF_7$. First, put
\[\Delta=\z\frac12\big(\iF_7-\iF_7^{(2)}\big)\big(2\iF_7^{(2)}-\iF_7\big).\]

\begin{ex}We see
\[\cP_2(14;2)=\{\Delta\}.\]
\end{ex}
\begin{proof}First, we easily see
\[\Delta\in\cM_2(14)\cap(q-q^2-2q^3+\C[[q]]q^4).\]
Note $\cS_2(14)=\cS_2(14;2)$ and $\dim\cS_2(14)=1$ by Example \ref{ex}. Let $\cP_2(14;2)=\{f\}$, then we see $\ia_1(f)=1$ and
\begin{align*}\ia_2(f)&=-2^0=-1,\\
\ia_3(f)&=\tr(\iT(3)|_{\cS_2(14)})=-2.\end{align*}
Thus, we see
\[f-\Delta\in\cM_2(14)\cap\C[[q]]q^4=\{0\}\]
i.e. $f=\Delta$, and get the assertion.\end{proof}
\smallskip
We remark that $\Delta$ may be represented as a multiplicative $\eta$-product (cf. \cite{DKM}).
\begin{Lem}\label{gen}For $k\ge0$, we get
\[\cS_{k+2}(14)=\Delta\cM_k(14).\]
\end{Lem}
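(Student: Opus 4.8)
The plan is to show that multiplication by $\Delta$ carries $\cM_k(14)$ isomorphically onto $\cS_{k+2}(14)$, by proving the two inclusions $\Delta\cM_k(14)\subseteq\cS_{k+2}(14)$ and $\cS_{k+2}(14)\subseteq\Delta\cM_k(14)$ separately. Everything hinges on one structural fact about $\Delta$: viewed on the extended upper half-plane for $\Gamma_0(14)$, it has no zero on $\mathbb{H}$ and a \emph{simple} zero at each of the four cusps. Granting this, both inclusions reduce to a comparison of orders of vanishing, and injectivity of multiplication by the nonzero form $\Delta$ is automatic since $\C[[q]]$ is an integral domain.

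To pin down the divisor of $\Delta$ I would invoke the valence formula for $\Gamma_0(14)$. This group has no elliptic points, index $[\mathrm{SL}_2(\Z):\Gamma_0(14)]=24$, and exactly four cusps, so a nonzero modular form of weight $w$ has total order of vanishing $\frac{24w}{12}=2w$ over $\mathbb{H}^\ast/\Gamma_0(14)$. For $\Delta\in\cS_2(14)$ this total equals $4$. Being a cusp form, $\Delta$ vanishes at each of the four cusps and so contributes at least $4$ from the cusps alone; equality therefore forces the zero at every cusp to be simple and leaves no room for any zero on $\mathbb{H}$. (Alternatively, the $\eta$-product representation of $\Delta$ noted above displays the same divisor directly, since $\eta$ is nonvanishing on $\mathbb{H}$.)

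With the divisor known, both inclusions are formal. If $g\in\cM_k(14)$, then $\Delta g$ is holomorphic on $\mathbb{H}$, obeys the weight $k+2$ transformation law for $\Gamma_0(14)$, and at each cusp has order at least $1+0>0$, so $\Delta g\in\cS_{k+2}(14)$. Conversely, given $f\in\cS_{k+2}(14)$, set $g=f/\Delta$: since $\Delta$ is nonvanishing on $\mathbb{H}$, $g$ is holomorphic there; it transforms with weight $k$ for $\Gamma_0(14)$; and at each cusp $\mathrm{ord}(f)\ge 1=\mathrm{ord}(\Delta)$ yields $\mathrm{ord}(g)\ge 0$, so $g$ is holomorphic at the cusps. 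Hence $g\in\cM_k(14)$ and $f=\Delta g\in\Delta\cM_k(14)$, giving $\cS_{k+2}(14)=\Delta\cM_k(14)$. The only substantive step is the divisor computation of $\Delta$; the main obstacle there is assembling the geometric data of $\Gamma_0(14)$ (no elliptic points, four cusps) to run the valence formula, after which the rest is a routine order count.
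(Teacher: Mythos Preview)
Your argument is correct, but it differs from the paper's. The paper proves only the easy inclusion $\Delta\cM_k(14)\subseteq\cS_{k+2}(14)$ and then appeals to the equality of dimensions $\dim\cS_{k+2}(14)=\dim\cM_k(14)$, which follows from the standard dimension formulas (cf.\ \cite[Theorem~3.5.1]{DS}); since multiplication by the nonzero $\Delta$ is injective, this forces equality. Your route instead computes the divisor of $\Delta$ via the valence formula---using that $\Gamma_0(14)$ has index $24$, four cusps, and no elliptic points---and then divides. The two arguments are essentially dual: the dimension identity and the divisor statement (simple zeros at the four cusps, none on $\mathbb{H}$) are both reflections of the fact that $X_0(14)$ has genus~$1$ with four cusps and no elliptic points. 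Your approach has the virtue of being self-contained once the basic geometric invariants of $\Gamma_0(14)$ are known, and it explains \emph{why} the map is surjective rather than inferring it from a dimension count; the paper's approach is shorter but outsources the work to the cited dimension formulas.
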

\begin{proof}
The asseriton follows from $\cS_{k+2}(14)\supset\Delta\cM_k(14)$ and
\[\dim\cS_{k+2}(14)=\dim\cM_k(14)=\dim(\Delta\cM_k(14)).\]
\end{proof}
\begin{ex}We see
\begin{align*}\cP_4(14;1)&=\big\{\z\frac18\Delta(\iC_2+7\iC_2^{(7)})\big\},\\
\cP_4(14;14)&=\big\{\z\frac12\Delta(3\iF_7^2-7\iF_7\iF_7^{(2)}+6\iF_7^{(2)2})\big\}.\end{align*}
\end{ex}
\begin{proof}
At first, we see
\[\cS_4(14)\cap\C[[q]]q^5=\Delta\big(\cM_2(14)\cap\C[[q]]q^4\big)=\{0\}\]
by Lemma \ref{gen} and \ref{deg}.
Note that $\dim\cS_4(14;1)=1$ by Example \ref{ex}. Let $\cP_4(14;1)=\{f\}$, then we see $\ia_1(f)=1$, $\ia_2(f)=2$ and $\ia_4(f)=\ia_2(f)^2=4$. In addition, we see
\[\ia_3(f)=\z\frac14(6-10-10+6)=-2\]
by Theorem \ref{2} and
\begin{align*}\tr(\iT(3)|_{\cS_4(14)})&=6,\\
\z\frac12\tr(\iT(6)|_{\cS_4(14)})&=-10,\\
\z\frac17\tr(\iT(21)|_{\cS_4(14)})&=-10,\\
\z\frac1{14}\tr(\iT(42)|_{\cS_4(14)})&=6.\end{align*}
Thus, we get
\[f-\z\frac18\Delta(\iC_2+7\iC_2^{(7)})\in\cS_4(14)\cap\C[[q]]q^5=\{0\}.\]
We get the second assertion in a similar way.
\end{proof}
For many other examples for $N=1,2,3,4,6,8,9$, see \cite{S2}.

\end{document}